\newtheorem{theorem}{Theorem}[section]
\newtheorem{prop}[theorem]{Proposition}
\newtheorem{cor}[theorem]{Corollary}
\newtheorem{conj}[theorem]{Conjecture}
\newtheorem{claim}[theorem]{Claim}
\theoremstyle{definition}
\theoremstyle{remark}
\newenvironment{poc}{\begin{proof}[Proof of claim]}{\end{proof}}
\newcommand{\C}[1]{{\protect\mathcal{#1}}}
\title{Stability through non-shadows}
\author{
Jun Gao\thanks{Extremal Combinatorics and Probability Group (ECOPRO), Institute for Basic Science (IBS), Daejeon, South Korea. Emails: {\texttt \{jungao, hongliu, zixiangxu\}@ibs.re.kr}. Supported by IBS-R029-C4.}
\and 
Hong Liu\footnotemark[1]
\and
Zixiang Xu\footnotemark[1]
}
\begin{document}

\maketitle
\begin{abstract}
 We study families $\mathcal{F}\subseteq 2^{[n]}$ with restricted intersections and prove a conjecture of Snevily in a stronger form for large $n$. We also obtain stability results for Kleitman's isodiametric inequality and families with bounded set-wise differences. Our proofs introduce a new twist to the classical linear algebra method, harnessing the non-shadows of $\mathcal{F}$, which may be of independent interest. 
\end{abstract}

\section{Introduction}




\subsection{Restricted intersections}
The celebrated theorem of Erd\H{o}s, Ko and Rado~\cite{1961EKR} states that when $n\geq 2k$, every $k$-uniform intersecting family $\mathcal{F}\subseteq 2^{[n]}$ has size at most $\binom{n-1}{k-1}$. We can view the Erd\H{o}s-Ko-Rado theorem as a result on families with restricted intersections where the empty intersection is forbidden. 
A family $\mathcal{F}\subseteq 2^{[n]}$ is \emph{Sperner} if for any distinct $A,B\in \C F$, $A\not\subseteq B$. Another cornerstone result in extremal set theory is Sperner's theorem~\cite{1928Sperner} from 1928, stating that any Sperner family in $2^{[n]}$ has size at most $\binom{n}{\lfloor n/2\rfloor}$. In Sperner's theorem, $|A\setminus B|=0$ is forbidden.

What can we say if we instead prescribe all possible pairwise intersection sizes? Formally, for a subset $L$ of non-negative integers, a family $\mathcal{F}$ is $L$-intersecting if for any distinct $F,F'\in\mathcal{F}$, $|F\cap F'|\in L$. Results of this type date back to the work of Fisher~\cite{1940Fisher}, who considered the $L$-intersecting problem when $L$ consists of a single element. More precisely, Fisher~\cite{1940Fisher} proved that a uniform family with this property cannot contain more sets than the size of its underlying set. Later, this problem was extensively studied via linear algebra methods by e.g.,~Frankl-Wilson~\cite{1981FranklWilson}, Snevily~\cite{2003Snevily}, Ray-Chaudhuri-Wilson~\cite{1975Ray} and Alon-Babai-Suzuki~\cite{1991AlonBabaiSuzuki}. We also refer the interested readers to the recent survey~\cite{2019book}. 


The following fundamental result was proved by Ray-Chaudhuri and Wilson~\cite{1975Ray} in 1975. 

\begin{theorem}[Ray-Chaudhuri-Wilson~\cite{1975Ray}]\label{thm:RCW}
    Let $L=\{\ell_{1},\ell_{2},\ldots,\ell_{s}\}$ be a set of $s$ non-negative integers and $k$ be a positive integer. Let $\mathcal{F}$ be a $k$-uniform $L$-intersecting family, then $|\mathcal{F}|\le\binom{n}{s}$.
\end{theorem}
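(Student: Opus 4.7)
The plan is to invoke the classical polynomial (linear algebra) method. We may assume $\ell_i < k$ for every $i$, since two distinct $k$-sets intersect in fewer than $k$ elements, so any $\ell_i \ge k$ is vacuous in $L$ and can be deleted (this only sharpens the bound). For each $F \in \mathcal{F}$, let $v_F \in \{0,1\}^n$ denote its characteristic vector and associate the polynomial
\[
p_F(x_1, \ldots, x_n) \;=\; \prod_{i=1}^{s}\Bigl(\sum_{j \in F} x_j - \ell_i\Bigr) \;\in\; \mathbb{R}[x_1, \ldots, x_n].
\]
Evaluating on characteristic vectors gives a triangular pattern: $p_F(v_F) = \prod_{i=1}^{s}(k - \ell_i) \neq 0$ because each $\ell_i < k$, while for distinct $F' \in \mathcal{F}$ we have $\sum_{j \in F} v_{F',j} = |F \cap F'| \in L$, so some factor vanishes and $p_F(v_{F'}) = 0$. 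Consequently the $\{p_F\}_{F \in \mathcal{F}}$ are linearly independent as functions on the point set $\{v_F : F \in \mathcal{F}\}$.

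The next step is to sandwich all $p_F$ inside a small-dimensional function space. Since we evaluate only on $\{0,1\}$-vectors, we may multilinearize by replacing $x_j^2$ with $x_j$, so each $p_F$ becomes a multilinear polynomial of total degree at most $s$. This already yields the a-priori bound $|\mathcal{F}| \le \sum_{d=0}^{s}\binom{n}{d}$. To sharpen it to $\binom{n}{s}$, I would exploit $k$-uniformity: on the slice $\sum_j x_j = k$, any multilinear monomial $x_S$ with $|S| = d$ satisfies
\[
(k - d)\, x_S \;=\; x_S\Bigl(\sum_{j=1}^{n} x_j\Bigr) - d\, x_S \;=\; \sum_{j \notin S} x_{S \cup \{j\}},
\]
after using $x_j^2 = x_j$. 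When $k \ge s$, the factor $k - d$ is nonzero for every $d < s$, so iterating this identity rewrites each multilinear monomial of degree $<s$ as a linear combination of degree-exactly-$s$ monomials. Hence on the $k$-slice every $p_F$ lies in the $\binom{n}{s}$-dimensional span of $\{x_S : |S| = s\}$, and the linear independence above forces $|\mathcal{F}| \le \binom{n}{s}$.

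The main obstacle---or rather the key combinatorial twist---is precisely this slice-reduction identity that collapses the ambient dimension from the elementary $\sum_{d \le s}\binom{n}{d}$ down to $\binom{n}{s}$; everything else is a clean packaging of characteristic vectors, the product polynomial, and diagonal/off-diagonal evaluation. The residual case $k < s$ needs a brief side remark: since any two distinct $k$-sets share at most $k-1$ elements, we may replace $L$ with $L \cap \{0, 1, \ldots, k-1\}$, whose size is at most $k \le s$, and then apply the same argument (or simply invoke the trivial $|\mathcal{F}| \le \binom{n}{k}$) to obtain the stated bound.
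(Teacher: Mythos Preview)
The paper does not prove Theorem~\ref{thm:RCW}; it is quoted in the introduction as the classical 1975 result of Ray-Chaudhuri and Wilson, serving only as background to motivate Conjecture~\ref{conj:main} and the paper's own contributions. There is therefore no proof in the paper to compare your proposal against.

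That said, your argument is the standard multilinear-polynomial proof (in the style of Alon--Babai--Suzuki and Babai--Frankl) and is essentially correct. The main steps---diagonal nonvanishing $p_F(v_F)=\prod_i(k-\ell_i)\ne 0$, off-diagonal vanishing $p_F(v_{F'})=0$, multilinear reduction, and the slice identity $(k-d)\,x_S=\sum_{j\notin S}x_{S\cup\{j\}}$ that promotes every monomial of degree $d<s$ up to degree exactly $s$ when $k\ge s$---are all sound and very much in the spirit of the linear-algebra arguments the paper itself deploys elsewhere (cf.\ Proposition~\ref{prop:triangular} and the proofs in Sections~\ref{sec:pre}--\ref{sec:Snevily}).

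One small quibble: you write that deleting the vacuous values $\ell_i\ge k$ from $L$ ``only sharpens the bound.'' Deleting them replaces $s$ by some $s'\le k$, and $\binom{n}{s'}\le\binom{n}{s}$ is not automatic in general (for instance if $s>n-s'$). In practice the Ray-Chaudhuri--Wilson theorem is stated with the hypothesis $\ell_i<k$ (hence $s\le k$) built in, so the issue does not arise; you should either assume this from the start or remark that the statement is only meaningful in that regime.
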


Snevily~\cite{1995SnevilyJCD} proposed the following conjecture in 1995.

\begin{conj}[Snevily~\cite{1995SnevilyJCD}]\label{conj:main}
    Let $L=\{\ell_{1},\ell_{2},\ldots,\ell_{s}\}$ be a set of $s$ non-negative integers and $K=\{k_{1},k_{2},\ldots,k_{r}\}$ be a set of $r$ positive integers with $\max{\ell_{i}}<\min{k_{j}}$. Let $\mathcal{F}\subseteq \bigcup_{i=1}^{r}\binom{[n]}{k_{i}}$ be an $L$-intersecting family, then $|\mathcal{F}|\leq\binom{n}{s}$.
\end{conj}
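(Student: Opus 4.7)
The plan is to enhance the classical Frankl--Wilson / Ray-Chaudhuri--Wilson polynomial method by enlarging the usual linearly independent system with monomials indexed by the \emph{non-shadows} of $\mathcal{F}$.

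For each $F\in\mathcal{F}$ set
\[
p_F(x_1,\ldots,x_n):=\prod_{i=1}^{s}\bigl(\langle v_F,x\rangle-\ell_i\bigr),
\]
where $v_F\in\{0,1\}^n$ is the characteristic vector of $F$; reducing modulo $x_j^2=x_j$ we regard $p_F$ as a multilinear polynomial of degree at most $s$, lying in the space $V$ of dimension $\sum_{t=0}^{s}\binom{n}{t}$. Since $|F|=k_F>\max_i\ell_i$, we have $p_F(v_F)\ne 0$; the $L$-intersecting condition gives $p_F(v_{F'})=0$ for every other $F'\in\mathcal{F}$, and the usual triangular evaluation yields that $\{p_F\}_{F\in\mathcal{F}}$ is linearly independent in $V$.

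The novelty lies in the observation that, after multilinearization, $p_F$ is supported solely on monomials $x_S$ with $S\subseteq F$. Setting $\partial_t\mathcal{F}:=\{S\in\binom{[n]}{t}:S\subseteq F\text{ for some }F\in\mathcal{F}\}$ and $\mathcal{N}_t:=\binom{[n]}{t}\setminus\partial_t\mathcal{F}$, every monomial $x_T$ with $T\in\mathcal{N}_t$ vanishes at all $v_F$, $F\in\mathcal{F}$. I would then verify that the enlarged collection
\[
\{p_F:F\in\mathcal{F}\}\cup\{x_T:T\in\mathcal{N}_t,\ 0\le t\le s\}
\]
remains linearly independent in $V$: given a dependence $\sum_F c_F p_F+\sum_T d_T x_T=0$, evaluating at each $v_{F'}$ kills the $x_T$ terms (since $T\not\subseteq F'$) and forces $c_{F'}=0$; the residual identity $\sum_T d_T x_T=0$ is a linear relation among distinct monomials, so all $d_T=0$. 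Comparing dimensions yields the \emph{shadow inequality}
\[
|\mathcal{F}|\;\le\;\sum_{t=0}^{s}\binom{n}{t}-\sum_{t=0}^{s}|\mathcal{N}_t|\;=\;\sum_{t=0}^{s}|\partial_t\mathcal{F}|,
\]
which refines the naive Alon--Babai--Suzuki bound $\sum_{t=0}^{s}\binom{n}{t}$ by exactly the non-shadow count.

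The main obstacle is passing from this shadow inequality to the exact Snevily bound $\binom{n}{s}$. Since $|\partial_t\mathcal{F}|\le\binom{n}{t}=O(n^{t})$, the low-level shadows ($t<s$) contribute only $O(n^{s-1})$, so for large $n$ one immediately obtains $|\mathcal{F}|\le(1+o(1))\binom{n}{s}$; the real difficulty is to absorb this lower-order error and reach the sharp bound. I expect to handle this via a dichotomy on the size of $|\partial_s\mathcal{F}|$: if $|\partial_s\mathcal{F}|$ is bounded away from $\binom{n}{s}$ by a small positive amount, then the lower-order shadows fit comfortably into the slack; if instead $|\partial_s\mathcal{F}|$ is extremely close to $\binom{n}{s}$, then a Kruskal--Katona-type rigidity would force $\mathcal{F}$ to be essentially $s$-uniform, and the hypothesis $\max_i\ell_i<\min_j k_j$ together with the $L$-intersecting condition should trim any non-$s$-uniform remainder, collapsing $\mathcal{F}$ onto the extremal family $\binom{[n]}{s}$. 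Quantifying this dichotomy ought to simultaneously yield the exact bound and the stability statement hinted at by the ``stronger form'' in the abstract.
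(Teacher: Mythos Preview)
Your shadow inequality $|\mathcal{F}|\le\sum_{t=0}^{s}|\partial_t\mathcal{F}|$ is correct and is exactly the non-shadow refinement the paper advertises. However, the paper deploys that device for its stability results on Kleitman-type problems, \emph{not} for Snevily's conjecture. For the conjecture the paper only establishes the bound for sufficiently large $n$, and by a different route: it first proves $|\mathcal{F}|\le\sum_{i=0}^{s}\binom{n-1}{i}$ for any $L$-intersecting Sperner family via the polynomial method with the twist of substituting $x_1=1$ before multilinearizing (so the polynomials live in $n-1$ variables); it then splits into the cases $0\in L$ and $0\notin L$. In the first case it runs an induction on $s$ using a double-counting over the link families $\mathcal{F}_x$; in the second it shows that $\mathcal{F}$ is either trivially $\ell_1$-intersecting (hence of size at most $\sum_{i\le s}\binom{n-2}{i}$) or has size $O_{M,s}(n^{s-1})$ where $M$ is the rank of $\mathcal{F}$.

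Your proposed dichotomy to pass from the shadow inequality to the sharp bound has a genuine gap. When $|\partial_s\mathcal{F}|$ is close to $\binom{n}{s}$, Kruskal--Katona rigidity constrains $\partial_s\mathcal{F}$, not $\mathcal{F}$ itself; nothing forces $\mathcal{F}$ to be ``essentially $s$-uniform'', since $\mathcal{F}$ consists of sets of sizes $k_j\ge s$ and any sufficiently rich family of larger sets already has full $s$-shadow. In the other branch your slack $\binom{n}{s}-|\partial_s\mathcal{F}|$ must absorb up to $\sum_{t<s}\binom{n}{t}=\Theta(n^{s-1})$ of lower shadows, and you give no mechanism that secures that much slack. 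Indeed the extremal configuration $\mathcal{F}=\binom{[n]}{s}$ with $L=\{0,\ldots,s-1\}$ has $\sum_{t=0}^{s}|\partial_t\mathcal{F}|=\sum_{t=0}^{s}\binom{n}{t}$, so your shadow inequality overshoots by exactly $\sum_{t<s}\binom{n}{t}$ at the extremum and cannot be closed by this dichotomy alone. Note finally that the full conjecture remains open; the paper proves it only for $n\ge n_0(L,K)$, so aiming for all $n$ is beyond what the paper achieves.
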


This conjecture, if true, would be a generalization of the Ray-Chaudhuri-Wilson theorem. Indeed, Theorem~\ref{thm:RCW} shows that Conjecture~\ref{conj:main} is true when $|K|=1$. Using the multilinear polynomial methods, Alon, Babai and Suzuki~\cite{1991AlonBabaiSuzuki} proved that families as above satisfy $|\mathcal{F}|\leq\binom{n}{s}+\cdots+\binom{n}{s-r+1}$ and Snevily~\cite{1994JCTASnevily} showed that $|\mathcal{F}|\leq\binom{n-1}{s}+\cdots+\binom{n-1}{0}$. Both of these bounds verify Conjecture~\ref{conj:main} asymptotically, that is, $|\mathcal{F}|\le (1+o(1))\binom{n}{s}$ when $n\rightarrow\infty$. 

In support of Conjecture~\ref{conj:main}, Snevily~\cite{1995SnevilyJCD} proved the special case when $L=\{0,1,\ldots,s-1\}$, and for the general case that $|\mathcal{F}|\leq\sum_{i=s-2r+1}^{s}\binom{n-1}{i}$, implying that $|\mathcal{F}|\leq \binom{n}{s}+\binom{n}{s-1}$ for large $n$. For more results related to Conjecture~\ref{conj:main}, we refer the readers to~\cite{2009JCTAChen, 2019book, 2015EUJC, 2007EUJC, 2018DmWang} and the reference therein.

Observe that, if a set system $\mathcal{F}\subseteq 2^{[n]}$ satisfies the conditions in Conjecture~\ref{conj:main}, then $\mathcal{F}$ is Sperner. By LYM inequality (Theorem~\ref{thm:LYM}), It is clear that Conjecture~\ref{conj:main} is true when $n<2s-1$. We propose a stronger conjecture for Sperner systems as follows.

\begin{conj}\label{conj:new}
    Let $n,s$ be two integers with $n\ge 2s-1$, $L=\{\ell_{1},\ell_{2},\ldots,\ell_{s}\}$ be a set of $s$ non-negative integers and $\mathcal{F}\subseteq 2^{[n]}$ be an $L$-intersecting Sperner system, then $|\mathcal{F}|\leq\binom{n}{s}$.
\end{conj}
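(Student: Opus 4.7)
The plan is to enhance the classical multilinear polynomial method by augmenting the standard system of polynomials with monomials supported on the \emph{non-shadow} of $\mathcal{F}$. For each $F\in\mathcal{F}$ with characteristic vector $\V{v}_F\in\{0,1\}^n$, I set
\[
p_F(\V{x}) = \prod_{\substack{\ell\in L\\ \ell<|F|}}\bigl(\innprod{\V{x},\V{v}_F}-\ell\bigr)
\]
and reduce modulo $x_i^2=x_i$ to obtain a multilinear polynomial $\tilde p_F$ in the $\binom{n}{\leq s}$-dimensional space $U_{\leq s}$ of multilinear polynomials of degree at most $s$. The diagonal calculation $\tilde p_F(\V{v}_{F'}) = \prod_{\ell<|F|}(|F\cap F'|-\ell)$ is nonzero iff $F=F'$: for $F'\neq F$, the Sperner hypothesis forces $|F\cap F'|<|F|$ and the $L$-intersecting hypothesis forces $|F\cap F'|\in L$, killing the product. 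This gives the classical linear independence of $\{\tilde p_F\}_{F\in\mathcal{F}}$ in $U_{\leq s}$, and hence the weak bound $|\mathcal{F}|\leq\binom{n}{\leq s}$.

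The key new observation is that $p_F$ involves only the variables $\{x_i:i\in F\}$, so $\tilde p_F\in\mathrm{span}\{e_T : T\subseteq F\}$ where $e_T=\prod_{i\in T}x_i$. Setting $\partial^{\leq s}\mathcal{F}=\bigcup_{F\in\mathcal{F}}\binom{F}{\leq s}$ and defining the non-shadow $\mathcal{N}=\binom{[n]}{\leq s}\setminus\partial^{\leq s}\mathcal{F}$, the polynomials $\{\tilde p_F\}$ and the pure monomials $\{e_T : T\in\mathcal{N}\}$ occupy disjoint coordinate subspaces in the monomial basis, so together they are linearly independent in $U_{\leq s}$, sharpening the estimate to $|\mathcal{F}|\leq|\partial^{\leq s}\mathcal{F}|$. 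To push this all the way to $\binom{n}{s}$, I would then project to the top-degree quotient $U_{\leq s}/U_{\leq s-1}\cong U_s$: each $\tilde p_F$ with $|F|>\max L$ reduces to $s!\,\chi_F$, where $\chi_F\in\mathbb{R}^{\binom{[n]}{s}}$ is the inclusion vector with $\chi_F(T)=[T\subseteq F]$, while non-shadow monomials of degree $<s$ project to zero. The target bound then reduces to showing that the kernel $K=\mathrm{span}\{\tilde p_F\}\cap U_{\leq s-1}$ is trivial, equivalently that $\{\chi_F\}$ is linearly independent in $\mathbb{R}^{\binom{[n]}{s}}$.

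The main obstacle is this last kernel calculation: a priori $\dim K$ could be as large as $|\partial^{\leq s-1}\mathcal{F}|$, which would destroy the gain. To resolve it I plan to iterate the non-shadow augmentation at level $s-1$, adding the monomials $\{e_T : T\in\binom{[n]}{s-1}\setminus\partial_{s-1}\mathcal{F}\}$, and to invoke the hypothesis $n\geq 2s-1$ through the inequality $\binom{n}{s-1}\leq\binom{n}{s}$; this should provide enough room for a Kruskal--Katona or LYM-style estimate relating $|\partial_{s-1}\mathcal{F}|$ to $|\mathcal{F}|$ to close the bound. A subtlety is the treatment of sets $F$ with $|F|\leq\max L$, for which $\tilde p_F$ has degree strictly less than $s$ and lies in $K$ from the outset. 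For such $F$, I would lift $\tilde p_F$ to degree exactly $s$ by multiplying by a carefully chosen monomial $e_{S_F}$ with $S_F\subseteq F$ of size $s-\deg\tilde p_F$; triangularity is preserved, and the choices of the $S_F$ need to be coordinated (plausibly via a Hall-type matching on non-shadows) so that the resulting leading vectors remain independent, which should be feasible since Sperner combined with $L$-intersecting severely restricts how many such small sets can appear in $\mathcal{F}$.
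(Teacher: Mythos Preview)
The statement you are attempting to prove is Conjecture~\ref{conj:new}, which the paper explicitly leaves \emph{open}; there is no proof of it in the paper to compare against. The paper establishes only partial results toward it: Theorem~\ref{thm :0} handles the case $0\in L$ for $n\ge 3s^{2}$ by combining the polynomial method (with the twist of fixing $x_{1}=1$ so as to work in an $(n-1)$-variable space) with an induction on $s$, and Theorem~\ref{thm:large} handles $0\notin L$ under a bounded-rank hypothesis. Neither proceeds along the route you sketch.

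As a stand-alone attempt on the full conjecture, your proposal has a genuine gap at the decisive step. The non-shadow augmentation you describe is correct and is essentially the paper's main device, but on its own it only yields $|\mathcal{F}|\le|\partial^{\le s}\mathcal{F}|$, which can certainly exceed $\binom{n}{s}$. Your plan to close the gap is to project to the degree-$s$ quotient and argue that the leading terms $\{\chi_{F}\}_{F\in\mathcal{F}}$ are linearly independent in $\mathbb{R}^{\binom{[n]}{s}}$; but that linear-independence statement already implies $|\mathcal{F}|\le\binom{n}{s}$ directly, without any of the preceding machinery, so you have reduced the conjecture to an assertion of at least equal strength, for which you offer only heuristics (Kruskal--Katona, LYM, a Hall-type matching) rather than an argument. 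The difficulty is compounded by the sets $F$ with $|F|\le\max L$: their polynomials $\tilde p_{F}$ have degree strictly less than $s$ and therefore lie in the kernel $K$ from the outset, so your ``lift by $e_{S_{F}}$'' step must manufacture linearly independent degree-$s$ vectors for all of them in a coordinated way. Nothing in the hypotheses bounds the number of such small $F$ relative to $\binom{n}{s}$ beyond what the conjecture itself would give, so this is not a technicality but the heart of the problem.
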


Clearly, Conjecture~\ref{conj:new} implies Conjecture~\ref{conj:main}. Our first result verifies Conjecture~\ref{conj:new} when $0\in L$ for reasonably large $n$.

\begin{theorem}\label{thm :0}
    Let $L=\{\ell_{1},\ell_{2},\ldots,\ell_{s}\}$ be a set of $s$ non-negative integers and $\mathcal{F}\subseteq 2^{[n]}$ be an $L$-intersecting Sperner family, we have $|\mathcal{F}| \le \sum_{i=0}^{s}\binom{n-1}{i}$. In particular, if $0\in L$ and $n\ge 3s^{2}$, then $|\mathcal{F}|\leq\binom{n}{s}$, with equality holds if and only if $L=\{0,1,2,\ldots,s-1\}$ and $\mathcal{F} = \binom{[n]}{s}$.
\end{theorem}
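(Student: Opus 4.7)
My plan is to use the multilinear polynomial (linear algebra) method, augmented by auxiliary monomials indexed by ``non-shadow'' $s$-sets---the new twist alluded to in the abstract. For each $F \in \mathcal{F}$, set
\[
p_F(x) := \prod_{\substack{\ell \in L \\ \ell < |F|}} \Bigl(\sum_{i \in F} x_i - \ell\Bigr),
\]
a multilinear polynomial on $\{0,1\}^n$ of degree at most $s$. The Sperner property forces $|F \cap F'| < |F|$ for every distinct $F, F' \in \mathcal{F}$, and combined with $|F \cap F'| \in L$ this gives $p_F(v_{F'}) = 0$ for $F \ne F'$, while $p_F(v_F) = \prod_{\ell < |F|}(|F| - \ell) \ne 0$. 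The evaluation matrix is thus diagonal and nonzero, so $\{p_F\}_{F \in \mathcal{F}}$ is linearly independent. A Snevily-type coordinate-shave argument (as in~\cite{1994JCTASnevily}, with the Sperner hypothesis playing the role of his $\min K > \max L$ condition) then reduces the ambient dimension from $\sum_{i=0}^s \binom{n}{i}$ to $\sum_{i=0}^s \binom{n-1}{i}$, yielding the first bound.

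For the sharper bound under $0 \in L$ and $n \ge 3s^2$, I would split on $k^* := \max_{F \in \mathcal{F}}|F|$. If $k^* \le s$, then $n \ge 2s$ gives $\binom{n}{|F|} \le \binom{n}{s}$ for every $F$, and LYM (Theorem~\ref{thm:LYM}) yields $|\mathcal{F}|/\binom{n}{s} \le \sum_{F} \binom{n}{|F|}^{-1} \le 1$; equality forces all $|F| = s$ and $\mathcal{F} = \binom{[n]}{s}$, whence $|L|=s$ together with the $L$-intersecting condition pins $L = \{0, 1, \ldots, s-1\}$. The core case is $k^* > s$, and here the non-shadow augmentation enters. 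Call $S \in \binom{[n]}{s}$ a \emph{non-shadow} if $S \notin \mathcal{F}$ and $S \not\subseteq F$ for every $F \in \mathcal{F}$ with $|F| > s$; collect these in $\mathcal{N}$ and define $q_S(x) := \prod_{i \in S} x_i$. Since $q_S(v_F) = [S \subseteq F]$, the definition of $\mathcal{N}$ gives $q_S(v_F) = 0$ for every $F \in \mathcal{F}$, whereas $q_{S'}(v_S) = [S' = S]$ for $S, S' \in \mathcal{N}$. The diagonal argument extends: $\{p_F\}_{F \in \mathcal{F}} \cup \{q_S\}_{S \in \mathcal{N}}$ is linearly independent; applied to the augmented system, the Snevily shave should give
\[
|\mathcal{F}| + |\mathcal{N}| \le \sum_{i=0}^s \binom{n-1}{i}.
\]
Substituting $|\mathcal{N}| = \binom{n}{s} - |\partial_s \mathcal{F}_{\ge s}|$ and using $\sum_{i=0}^s \binom{n-1}{i} - \binom{n}{s} = \sum_{i=0}^{s-2}\binom{n-1}{i}$ rewrites the inequality as $|\mathcal{F}| \le \sum_{i=0}^{s-2}\binom{n-1}{i} + |\partial_s \mathcal{F}_{\ge s}|$.

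Closing the remaining gap to $|\mathcal{F}| \le \binom{n}{s}$ then demands a Kruskal--Katona / Bollob\'as-type upper bound on $|\partial_s \mathcal{F}_{\ge s}|$, leveraging that $0 \in L$ with $|L| = s$ heavily restricts the number and sizes of $F$ with $|F| > s$ in an $L$-intersecting Sperner family. The threshold $n \ge 3s^2$ is presumably calibrated exactly to this combinatorial estimate. This shadow-counting step is the principal obstacle: the algebraic half produces the clean inequality $|\mathcal{F}| + |\mathcal{N}| \le \sum_{i=0}^s \binom{n-1}{i}$, but the delicate interplay between the $L$-intersecting and Sperner hypotheses needed to upgrade it to the sharp bound $\binom{n}{s}$ in the regime $n \ge 3s^2$---together with strict inequality in the $k^* > s$ case, which yields uniqueness of the extremal configuration---is the technical heart of the proof.
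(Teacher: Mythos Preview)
Your Part 1 is correct and coincides with the paper's argument: the same polynomials (the paper writes the product over $\ell\in L,\ \ell\ne|F_i|$, which agrees with your $\ell<|F|$ since $\mathcal{F}$ is Sperner) together with the Snevily $x_1=1$ substitution.

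For Part 2 your plan diverges from the paper's and has a genuine gap. The paper does \emph{not} use non-shadows for this theorem at all; the non-shadow device is reserved for Theorems~\ref{thm:SymmetricStability}, \ref{thm:DifferenceStability} and the Katona warm-up. Instead the paper inducts on $s$: the case $L=\{0,1,\ldots,s-1\}$ is settled by truncating each $F$ to an $s$-subset and applying LYM; when $L\ne\{0,1,\ldots,s-1\}$, the subfamily $\mathcal{H}=\{F\in\mathcal{F}:|F|\le s\}$ is $L'$-intersecting with $|L'|\le s-1$, so $|\mathcal{H}|\le\binom{n}{s-1}$ by induction, while each link $\mathcal{F}_x=\{F\in\mathcal{F}:x\in F\}$ is $(L\setminus\{0\})$-intersecting, so $|\mathcal{F}_x|\le\sum_{i=0}^{s-1}\binom{n-1}{i}$ by Part 1. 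Double counting $\sum_{x}|\mathcal{F}_x|\ge(|\mathcal{F}|-|\mathcal{H}|)(s+1)$ and a short calculation using $n\ge 3s^2$ finishes, with strict inequality (hence uniqueness) whenever $L\ne\{0,\ldots,s-1\}$.

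Your non-shadow route, by contrast, stalls at exactly the step you flag: you need an upper bound on $|\partial_s\mathcal{F}_{\ge s}|$, and none is offered. Worse, the inequality $|\mathcal{F}|\le\sum_{i=0}^{s-2}\binom{n-1}{i}+|\partial_s\mathcal{F}_{\ge s}|$ is too weak to yield $\binom{n}{s}$ near the extremal configuration, since there $|\partial_s\mathcal{F}_{\ge s}|$ is essentially $\binom{n}{s}$ and your bound degenerates back to $\sum_{i=0}^{s}\binom{n-1}{i}>\binom{n}{s}$. There is also a technical snag in combining the $x_1=1$ shave with the monomials $q_S$: after the substitution, a $q_S$ with $1\in S$ becomes $\prod_{i\in S\setminus\{1\}}x_i$, which can be nonzero on $\boldsymbol a_F$ whenever $S\setminus\{1\}\subseteq F$ even if $S\not\subseteq F$, so the triangular vanishing $q_S(\boldsymbol a_F)=0$ you rely on need not survive the shave. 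The paper never mixes the shave with non-shadows, and its induction/double-counting argument sidesteps both issues entirely.
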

Note that when $L=\{0\}$, $\mathcal{F}=\{\{1\},\{2\},\ldots,\{n\}\}$ is a $\{0\}$-intersecting Sperner family, showing that the above upper bound $|\mathcal{F}| \le \sum_{i=0}^{s}\binom{n-1}{i}$ cannot be improved in general.

What if $0\notin L$? We say a family $\mathcal{F}=\{F_{1},F_{2},\ldots,F_{m}\}\subseteq 2^{n}$ has \emph{rank} $t$ if $\max_{i\in [m]}{|F_{i}|}=t$, and say $\mathcal{F}$ is \emph{trivially} $\ell$-intersecting if there is a set $A$ with size $\ell$ such that $A\subseteq\bigcap_{i=1}^{m}F_{i}$. We obtain better bounds when $0\notin L$ and $\mathcal{F}$ has relatively small rank.
\begin{theorem}\label{thm:large}
    Let $L=\{\ell_{1},\ell_{2},\ldots,\ell_{s}\}$ be a set of $s$ positive integers with $\ell_{1}<\ell_{2}<\cdots<\ell_{s}$ and $\mathcal{F}\subseteq 2^{[n]}$ be an $L$-intersecting Sperner family with rank $M$. Then either $\mathcal{F}$ is trivially $\ell_1$-intersecting and then $|\mathcal{F}|\le \sum_{i=0}^s{\binom{n-2}{i}}={\binom{n}{s}}-\Omega_{s}(n^{s-1})$, or $|\mathcal{F}|=O_{M,s}(n^{s-1})$.
\end{theorem}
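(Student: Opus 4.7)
The proof splits into Case 1 (trivially $\ell_1$-intersecting) and Case 2 (not).

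\textbf{Case 1.} Take a common $\ell_1$-set $A\subseteq\bigcap_F F$ and pass to $\mathcal{F}':=\{F\setminus A:F\in\mathcal{F}\}$ on $[n]\setminus A$. It is Sperner, and for distinct $F,F'\in\mathcal{F}$,
\[
\bigl|(F\setminus A)\cap(F'\setminus A)\bigr|=|F\cap F'|-\ell_1\in\{0,\ell_2-\ell_1,\ldots,\ell_s-\ell_1\},
\]
an $s$-element set containing $0$. Theorem~\ref{thm :0} then gives $|\mathcal{F}|=|\mathcal{F}'|\le\sum_{i=0}^s\binom{n-\ell_1-1}{i}\le\sum_{i=0}^s\binom{n-2}{i}$, using $\ell_1\ge 1$, and an elementary Pascal expansion confirms the $\Omega_s(n^{s-1})$ saving against $\binom{n}{s}$.

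\textbf{Case 2.} We aim at $|\mathcal{F}|=O_{M,s}(n^{s-1})$ by a double trace reduction to Theorem~\ref{thm :0}. Fix $F_0\in\mathcal{F}$ and partition $\mathcal{F}\setminus\{F_0\}$ into the trace classes $\mathcal{F}_A:=\{F:F\cap F_0=A\}$ indexed by $A\subseteq F_0$ with $|A|\in L$; there are at most $\sum_{\ell\in L}\binom{|F_0|}{\ell}\le s\binom{M}{s}=O_{M,s}(1)$ such classes. Each $\mathcal{F}_A$ is Sperner with all members containing $A$, so shifting out $A$ produces an $L'$-intersecting Sperner family with $L'=\{\ell-|A|:\ell\in L,\ \ell\ge|A|\}$ and $0\in L'$. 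When $|A|=\ell_j$ with $j\ge 2$ we have $|L'|=s-j+1\le s-1$, so Theorem~\ref{thm :0} directly gives $|\mathcal{F}_A|=O_s(n^{s-1})$.

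The essential case is $j=1$ (i.e.\ $|A|=\ell_1$), where $|L'|=s$ and the naive shift bound is only $O_s(n^s)$. This is precisely where the non-trivial hypothesis intervenes: there exists a witness $F^*\in\mathcal{F}$ with $A\not\subseteq F^*$. Partition $\mathcal{F}_A$ further by the trace $B:=F\cap F^*\in\bigcup_{\ell\in L}\binom{F^*}{\ell}$, producing $O_{M,s}(1)$ sub-classes $\mathcal{F}_{A,B}$ whose members all contain $A\cup B$. Because $A\not\subseteq F^*\supseteq B$ we have $|A\cap B|\le|A\cap F^*|\le\ell_1-1$, and since $|B|\ge\ell_1=|A|$, $|A\cup B|\ge|B|+1>\ell_1$. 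Shifting out $A\cup B$ then gives an $L''$-intersecting Sperner family with $\ell_1\notin L''$, whence $|L''|\le s-1$, and Theorem~\ref{thm :0} yields $|\mathcal{F}_{A,B}|=O_s(n^{s-1})$. Summing over $B$ and then over $A$ concludes the proof. The main technical point is verifying the ``jump'' $|A\cup B|>\ell_1$ produced by the witness $F^*$ --- i.e.\ converting the negation of trivial $\ell_1$-intersection into a concrete structural inequality --- together with covering the subcase $|A\cup B|\notin L$ (so $0\notin L''$) via the general, $0\notin L$ form of Theorem~\ref{thm :0}.
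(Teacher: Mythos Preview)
Your proof is correct and follows the same high-level plan as the paper: reduce to the first bound of Theorem~\ref{thm :0} applied to a subfamily whose intersection set has size at most $s-1$. Case~1 is handled identically. In Case~2 the mechanics differ. The paper selects the $\ell_1$-set $T$ maximizing $|\{F\in\mathcal{F}:T\subseteq F\}|$, sets $\mathcal{H}=\{F:T\subseteq F\}$, and uses a single witness $E\not\supseteq T$ to cover $\mathcal{H}$ by the subfamilies $\{F\in\mathcal{H}:x\in F\}$ for $x\in E\setminus T$; each such subfamily is $\{\ell_2,\ldots,\ell_s\}$-intersecting, and the maximality of $T$ then gives $|\mathcal{F}|\le\binom{M}{\ell_1}|\mathcal{H}|$. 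You instead perform a two-step trace partition on an arbitrary $F_0$ and then on a witness $F^*$; your inequality $|A\cup B|>\ell_1$ serves exactly the same purpose as the single element $x\in E\setminus T$ in forcing intersections above $\ell_1$. The paper's route yields somewhat smaller explicit constants (roughly $M\binom{M}{\ell_1}$ versus your $O(2^{2M})$), while yours is arguably more direct and avoids the extremal choice of $T$.

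One cosmetic slip: the intermediate bound $\sum_{\ell\in L}\binom{|F_0|}{\ell}\le s\binom{M}{s}$ is not correct in general (e.g.\ $s=2$, $M$ large, $\ell_2\approx M/2$), but the conclusion $O_{M,s}(1)$ that you actually use holds trivially via $\sum_{\ell}\binom{|F_0|}{\ell}\le 2^{|F_0|}\le 2^{M}$.
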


Combining Theorems~\ref{thm :0} and~\ref{thm:large}, we see that Conjecture~\ref{conj:main} is true when $n$ is sufficiently large.

\begin{cor}\label{cor:large}
    Let $L=\{\ell_{1},\ell_{2},\ldots,\ell_{s}\}$ be a set of $s$ non-negative integers and let $K=\{k_{1},k_{2},\ldots,k_{r}\}$ be a set of $r$ positive integers with $\max{\ell_{i}}<\min{k_{j}}$. There exists $n_0$ such that for any $n\ge n_0$ and $L$-intersecting family $\mathcal{F}\subseteq \bigcup_{i=1}^{r}\binom{[n]}{k_{i}}$, $|\mathcal{F}|\leq\binom{n}{s}$.   
\end{cor}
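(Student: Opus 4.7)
The plan is to treat the corollary as a short deduction from Theorems~\ref{thm :0} and~\ref{thm:large}, split into two cases according to whether $0\in L$. The first step is to observe that under the hypothesis $\max_i\ell_i<\min_j k_j$, any family $\mathcal{F}\subseteq \bigcup_{i=1}^{r}\binom{[n]}{k_{i}}$ that is $L$-intersecting is automatically a Sperner system: if distinct $A,B\in\mathcal{F}$ satisfied $A\subseteq B$, then $|A\cap B|=|A|\ge\min_j k_j>\max_i\ell_i$, contradicting $L$-intersectingness. Thus the hypotheses of both Theorem~\ref{thm :0} and Theorem~\ref{thm:large} apply.

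If $0\in L$, I would simply invoke Theorem~\ref{thm :0}: for $n\ge 3s^2$ we already obtain $|\mathcal{F}|\le\binom{n}{s}$, so the claim follows with $n_0=3s^2$. The remaining case is $0\notin L$, i.e., $\ell_1\ge 1$. Here the rank of $\mathcal{F}$ is $M:=\max_j k_j$, which is a constant depending only on $K$, not on $n$. I would apply Theorem~\ref{thm:large} and split according to its dichotomy. In the non-trivial branch, we directly get $|\mathcal{F}|=O_{M,s}(n^{s-1})$, which is $o(n^s)$ and therefore smaller than $\binom{n}{s}\sim n^s/s!$ once $n$ is large enough in terms of $s$ and $M$. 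In the trivial branch, the bound $|\mathcal{F}|\le \sum_{i=0}^{s}\binom{n-2}{i}$ needs to be compared with $\binom{n}{s}$; using Pascal's identity $\binom{n-2}{s}+\binom{n-2}{s-1}=\binom{n-1}{s}$ and $\binom{n-1}{s}+\binom{n-1}{s-1}=\binom{n}{s}$, one checks $\sum_{i=0}^{s}\binom{n-2}{i}=\binom{n}{s}-\Omega_s(n^{s-1})$ for large $n$, so again $|\mathcal{F}|\le\binom{n}{s}$.

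Taking $n_0$ to be the maximum of the thresholds from the two cases yields the desired bound uniformly. The entire argument is a routine packaging of the two theorems, so there is no genuine obstacle; the only mild point to verify is that the constant hidden in the $O_{M,s}(n^{s-1})$ estimate of Theorem~\ref{thm:large}, while depending on $K$ through $M$, is independent of $n$, which is precisely how that theorem is stated. Thus choosing $n_0$ large enough in terms of $s$ and $M=\max_j k_j$ suffices.
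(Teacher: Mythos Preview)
Your proposal is correct and matches the paper's intended deduction: the paper itself offers no explicit proof of Corollary~\ref{cor:large} beyond the sentence ``Combining Theorems~\ref{thm :0} and~\ref{thm:large}, we see that Conjecture~\ref{conj:main} is true when $n$ is sufficiently large,'' and your write-up is precisely the unpacking of that combination. The only minor imprecision is that the rank of $\mathcal{F}$ is \emph{at most} $M=\max_j k_j$ rather than necessarily equal to it, but since both bounds in Theorem~\ref{thm:large} are nondecreasing in $M$, this does not affect the argument.
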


\subsection{Restricted symmetric/set-wise differences}
For set systems with bounded symmetric differences, Kleitman~\cite{1966Kleitman} proved the following classical result: for integers $n>d$ and a family $\mathcal{F}\subseteq 2^{[n]}$ with $|A\triangle B| \le d$ for any $A,B \in \mathcal{F}$, 
\begin{equation*}
    |\mathcal{F}|\le 
    \begin{cases}
        \sum\limits_{i=0}^{k}\binom{n}{i} &  d=2k; \\ 
   2\sum\limits_{i=0}^{k}\binom{n-1}{i} &  d=2k+1.
    \end{cases}
\end{equation*}
The bounds in both cases above are optimal. For the even case, the upper bound can be attained by the radius-$k$ Hamming ball $\mathcal{K}(n,k) := \{ F:F\subseteq[n], |F|\le k \}$, and for the odd case, consider the family $\mathcal{K}_y(n,k) := \{ F:F\subseteq[n], |F\setminus\{y\}|\le k \}$. The original proof of Kleitman is combinatorial. Recently, Huang, Klurman and Pohoata~\cite{2020Huang} provided an algebraic proof via the Cvetkovi\'{c} bound~\cite{1972Cvetkovic} and extended the results when the allowed symmetric differences lie in a set of consecutive integers.

For a family $\mathcal{F}\subseteq 2^{[n]}$ and a subset $S\subseteq [n]$, we define the \emph{translate} of $\mathcal{F}$ by $S$ as $\mathcal{F}\triangle S:=\{F\triangle S:F\in\mathcal{F}\}$. Recently, Frankl~\cite{2017CPCFrankl} proved the following stability result for Kleitman's theorem.
\begin{theorem}[Frankl~\cite{2017CPCFrankl}]\label{thm:StabilityFrankl}
     Let $n\ge d+2$, $d\geqslant 0$ and let $\mathcal{F}\subseteq 2^{[n]}$ with $|A\triangle B| \le d$ for any $A,B \in \mathcal{F}$. 
     \begin{enumerate}
         \item If $d=2k$ and $\mathcal{F}$ is not contained in any translate of $\mathcal{K}(n,k)$, then $|\mathcal{F}|\leqslant \sum_{i=0}^{k}\binom{n}{i}-\binom{n-k-1}{k}+1$.
         \item If $d=2k+1$ and $\mathcal{F}$ is not contained in any translate of $\mathcal{K}_{y}(n,k)$, then $|\mathcal{F}| \le 2\sum_{i=0}^{k}\binom{n-1}{i} - \binom{n-k-2}{k}+1$.
     \end{enumerate}
\end{theorem}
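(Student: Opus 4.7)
By translating $\C F \mapsto \C F \triangle A_0$ for any fixed $A_0 \in \C F$---an operation that preserves pairwise symmetric differences and the \emph{contained-in-a-translate} hypothesis---I may assume $\emptyset \in \C F$. Then every $A \in \C F$ has $|A| \le d$, and in the even case $d=2k$ the stability hypothesis becomes: no $S \subseteq [n]$ with $|S|\le k$ satisfies $|A \triangle S| \le k$ for all $A \in \C F$. In particular some distinguished witness $A^* \in \C F$ with $|A^*| \ge k+1$ is "far" from every candidate center $S$.

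For the even case the plan is to deploy the non-shadow variant of the linear algebra method advertised in the abstract. To each $A \in \C F$ I attach a function $f_A : \{0,1\}^n \to \mathbb R$, viewed after multilinearization as a polynomial of degree $\le k$ in $x_1,\dots,x_n$, built from Hamming-distance factors $\sum_j (x_j-\chi_A(j))^2$ (together with a suitable parity split to keep the degree at $k$ even though $|A \triangle B|$ may reach $2k$). A triangular evaluation argument, ordering $\C F$ by size and using $p_A(\chi_A) \neq 0$ while $p_A(\chi_B) = 0$ on "lower" $B \in \C F$, forces linear independence of $\{f_A\}_{A \in \C F}$ inside the space of multilinear polynomials of degree $\le k$, whose dimension is $\sum_{i=0}^k \binom{n}{i}$. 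This already reproves Kleitman's bound. For the stability upgrade, the witness $A^*$ pins down a $(k+1)$-set $T = T(A^*) \subseteq [n]$ encoding the obstruction preventing any size-$\le k$ center from covering $A^*$. The non-shadow twist then shows that for every $k$-subset $U \subseteq [n] \setminus T$ with $U \notin \C F$, the multilinear monomial $x_U$ lies in $\mathrm{span}\{f_A : A \in \C F\}$ via a relation coming from $A^*$. Counting such non-shadow monomials yields at least $\binom{n-k-1}{k}-1$ extra linear dependences in the target space, giving the bound $\sum_{i=0}^k \binom{n}{i} - \binom{n-k-1}{k}+1$.

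For the odd case $d=2k+1$, I would split on a coordinate. For $y \in [n]$ let $\C F^+_y = \{A \in \C F : y \in A\}$ and $\C F^-_y = \{A \in \C F : y \notin A\}$. Within either part, all pairwise symmetric differences are at most $2k$, so restricting to the ground set $[n]\setminus \{y\}$ and applying the even case to each part gives $|\C F| \le 2\sum_{i=0}^k \binom{n-1}{i}$. The hypothesis that $\C F$ is not contained in a translate of $\C K_y(n,k)$ guarantees, for every choice of $y$, that at least one of $\C F^+_y, \C F^-_y$ is not contained in a translate of $\C K(n-1,k)$ on $[n]\setminus \{y\}$, so the even-case stability fires on that part and contributes the claimed loss $\binom{n-k-2}{k}-1$.

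\textbf{Main obstacle.} The technical heart is the non-shadow span step: extracting the precise $(k+1)$-set $T$ from the witness $A^*$ and verifying, for each $k$-subset $U \subseteq [n]\setminus T$ with $U \notin \C F$, that the monomial $x_U$ genuinely lies in $\mathrm{span}\{f_A : A \in \C F\}$ once a single $A^*$-relation is adjoined. Matching the counting $\binom{n-k-1}{k}$ to an honest rank statement---correctly accounting for the "$+1$" slippage (the monomial associated to $A^*$ itself), avoiding double-counting against $k$-subsets already represented in $\C F$, and choosing a polynomial basis in which the $A^*$-relation acts transparently on the non-shadow monomials---is precisely the bookkeeping that the non-shadow variant of the method is designed to handle, and it is where I expect the bulk of the work to lie.
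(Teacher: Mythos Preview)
First, note that the paper does not actually prove this theorem: Theorem~\ref{thm:StabilityFrankl} is quoted as a result of Frankl, explicitly attributed to a ``combinatorial and relatively involved'' argument in~\cite{2017CPCFrankl}. The paper instead proves the distinct Theorem~\ref{thm:SymmetricStability}, and remarks that its own method yields only a \emph{weaker} bound in the even case $d=2k$. So there is no in-paper proof to compare against; one can only weigh your sketch against the method used for Theorem~\ref{thm:SymmetricStability}.

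Your odd-case reduction contains a genuine gap. You split $\C F$ according to whether $y\in A$ and assert that within each part all pairwise symmetric differences drop to at most $2k$. This is false: if $A,B$ both contain $y$ (or both omit it), then $y\notin A\triangle B$, so $(A\setminus\{y\})\triangle(B\setminus\{y\})=A\triangle B$, which can still equal $2k+1$. The paper's proof of Theorem~\ref{thm:SymmetricStability} uses the correct split, namely by \emph{parity of set size}: since $|A\triangle B|\equiv |A|+|B|\pmod 2$, two sets of the same parity have even symmetric difference, hence at most $2k$.

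Your description of the non-shadow mechanism is also inverted relative to the paper. You claim each non-shadow monomial $x_U$ ``lies in $\mathrm{span}\{f_A\}$'', producing extra \emph{dependences}. In the paper (Claim~\ref{claim:S1}) one does the opposite: for each $T\in\binom{[n]}{k}\setminus\partial_k\C F_e$ the monomial $h_T=\prod_{i\in T}x_i$ is adjoined and the enlarged collection $\{f_i\}\cup\{h_T\}$ is shown to be linearly \emph{independent}, giving $|\C F_e|+|\mathcal S|\le\sum_{i\le k}\binom{n}{i}$. Moreover, no single witness $A^*$ or distinguished $(k+1)$-set $T$ is extracted; instead, smallness of $|\mathcal S|$ is used structurally (Claims~\ref{claim:S2}--\ref{claim:S3}) to force $\C F$ into a translate of $\C K(n,k+1)$, after which Hilton--Milner finishes. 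Since the paper itself states that this approach does \emph{not} recover Frankl's exact constant $\binom{n-k-1}{k}$ in the even case, your expectation that the non-shadow method alone matches the bounds of Theorem~\ref{thm:StabilityFrankl} appears unfounded.
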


Frankl's proof is combinatorial and relatively involved, making use of his earlier stability result for Katona theorem~\cite{2017JCTBFrankl}. We prove a finer stability result for Kleitman's theorem for the odd case. 


\begin{theorem}\label{thm:SymmetricStability}
        Let $\mathcal{F} \subseteq 2^{[n]}$ be a family of subsets of $[n]$ with $|A\triangle B| \le 2k+1$ for any $A,B \in \mathcal{F}$, then
either $|\mathcal{F}| \le 2\sum_{i=0}^{k}\binom{n}{i} - 2\binom{n-5k-1}{k}$, or $\mathcal{F}$ is contained in some translate of $\mathcal{K}(n,k+1)$. Furthermore, if $\mathcal{F}$ is contained in some translate of $\mathcal{K}(n,k+1)$, then either $\mathcal{F}$ is contained in some translate of $\mathcal{K}_{y}(n,k)$, or $|\mathcal{F}| \le 2\sum_{i=0}^{k}\binom{n-1}{i} - \binom{n-k-2}{k}+1$.
\end{theorem}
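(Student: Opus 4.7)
Part (B) of the theorem is an immediate consequence of Frankl's Theorem~\ref{thm:StabilityFrankl}(2), whose hypothesis is strictly weaker than ours: if $\mathcal{F}$ satisfies $|A\triangle B|\le 2k+1$ and is not contained in any translate of $\mathcal{K}_y(n,k)$, Frankl already yields $2\sum_{i=0}^k\binom{n-1}{i}-\binom{n-k-2}{k}+1$, whether or not $\mathcal{F}$ sits in a $\mathcal{K}(n,k+1)$-translate. Thus the substantive task is Part (A): under the hypothesis that $\mathcal{F}$ lies in no translate of $\mathcal{K}(n,k+1)$, show $|\mathcal{F}|\le 2\sum_{i=0}^k\binom{n}{i}-2\binom{n-5k-1}{k}$. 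The plan is, WLOG translating so that $\emptyset\in\mathcal{F}$, to split $\mathcal{F}=\mathcal{F}_e\sqcup\mathcal{F}_o$ by the parity of $|F|$; within each piece all pairwise symmetric differences are even and at most $2k$, so Frankl's Theorem~\ref{thm:StabilityFrankl}(1) supplies, for each $\star\in\{e,o\}$, the dichotomy: (i) $|\mathcal{F}_\star|\le\sum_{i=0}^k\binom{n}{i}-\binom{n-k-1}{k}+1$, or (ii) $\mathcal{F}_\star\subseteq S_\star\triangle\mathcal{K}(n,k)$ for some $S_\star\subseteq[n]$. When both pieces fall into branch (i), summation alone gives $|\mathcal{F}|\le 2\sum_{i=0}^k\binom{n}{i}-2\binom{n-k-1}{k}+2$, which is below the target for $n$ large since $\binom{n-k-1}{k}>\binom{n-5k-1}{k}$.

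The main case, where the non-shadow viewpoint enters, is when both pieces lie in branch (ii). If $|S_1\triangle S_2|\le 1$ then $\mathcal{F}\subseteq S_1\triangle\mathcal{K}(n,k+1)$, contradicting the hypothesis of (A); so $|S_1\triangle S_2|\ge 2$, and the triangle inequality through any $F_e\in\mathcal{F}_e,\ F_o\in\mathcal{F}_o$ gives $|S_1\triangle S_2|\le 4k+1$. Because $\mathcal{F}_e\subseteq S_1\triangle\mathcal{K}(n,k+1)$ while $\mathcal{F}\not\subseteq S_1\triangle\mathcal{K}(n,k+1)$, there is some $F^*\in\mathcal{F}_o$ with $|F^*\triangle S_1|\ge k+2$, and another triangle step yields $|F^*\triangle S_1|\le|F^*\triangle S_2|+|S_1\triangle S_2|\le k+(4k+1)=5k+1$; this is where the $5k+1$ in the statement originates. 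Setting $G^*:=F^*\triangle S_1$ with $|G^*|\in[k+2,5k+1]$, for every $F_e=S_1\triangle T_e\in\mathcal{F}_e$ the cross condition $|F_e\triangle F^*|\le 2k+1$ rewrites as $|T_e\triangle G^*|\le 2k+1$, which for $|T_e|=k$ forces $|T_e\cap G^*|\ge\tfrac12(k+|G^*|-2k-1)\ge\tfrac12$, so $T_e\cap G^*\neq\emptyset$. Thus the translate $\mathcal{F}_e\triangle S_1$ omits every $k$-subset of $[n]\setminus G^*$, a non-shadow of cardinality $\binom{n-|G^*|}{k}\ge\binom{n-5k-1}{k}$, giving $|\mathcal{F}_e|\le\sum_{i=0}^k\binom{n}{i}-\binom{n-5k-1}{k}$. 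A symmetric argument via a far witness $F^{**}\in\mathcal{F}_e$ (guaranteed because $\mathcal{F}\not\subseteq S_2\triangle\mathcal{K}(n,k+1)$) gives $|\mathcal{F}_o|\le\sum_{i=0}^k\binom{n}{i}-\binom{n-5k-1}{k}$, and adding produces the target.

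The remaining mixed case is handled similarly: say $\mathcal{F}_e$ falls into branch (i) and $\mathcal{F}_o$ into branch (ii) with center $S_2$. The hypothesis of (A) yields a far witness $F^*\in\mathcal{F}_e$ with $|F^*\triangle S_2|\ge k+2$, and the triangle inequality through any $F_o\in\mathcal{F}_o$ gives the sharper $|F^*\triangle S_2|\le(2k+1)+k=3k+1$; the resulting non-shadow argument yields $|\mathcal{F}_o|\le\sum_{i=0}^k\binom{n}{i}-\binom{n-3k-1}{k}$, which combined with Frankl's bound on $\mathcal{F}_e$ gives $|\mathcal{F}|\le 2\sum_{i=0}^k\binom{n}{i}-\binom{n-k-1}{k}-\binom{n-3k-1}{k}+1\le 2\sum_{i=0}^k\binom{n}{i}-2\binom{n-5k-1}{k}$ for $n$ large. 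The main obstacle of the whole argument is case (ii): reconciling the potentially distinct centers $S_1$ and $S_2$ through the cross-parity constraint, and using the hypothesis of (A) twice to locate far witnesses in \emph{both} parity classes, so that the two resulting non-shadow deficits combine to yield the $2\binom{n-5k-1}{k}$ appearing in the statement.
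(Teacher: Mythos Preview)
Your argument is correct, but it takes a genuinely different route from the paper's.

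\textbf{What you do.} You black-box Frankl's Theorem~\ref{thm:StabilityFrankl} twice: part~(2) for Part~(B) outright, and part~(1) applied separately to the even and odd parity classes to obtain a dichotomy (small / centered) for each. The residual work is then elementary: when both classes are centered, a cross-parity witness $F^*$ with $|F^*\triangle S_1|\in[k+2,5k+1]$ forces every $k$-element translate $T_e=F_e\triangle S_1$ to meet $G^*=F^*\triangle S_1$, so at least $\binom{n-5k-1}{k}$ $k$-sets are excluded. Your ``non-shadow'' step is thus pure counting inside a Hamming ball, not a polynomial/linear-algebra argument. (Your ``for $n$ large'' caveats are harmless: for $n\le 6k$ one has $\binom{n-5k-1}{k}=0$ and the first alternative in the theorem is trivially satisfied, while for $n\ge 6k+1$ the needed inequalities $\binom{n-k-1}{k}-1\ge\binom{n-5k-1}{k}$ and $\binom{n-k-1}{k}+\binom{n-3k-1}{k}-1\ge 2\binom{n-5k-1}{k}$ follow by telescoping.)

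\textbf{What the paper does.} The paper does \emph{not} invoke Theorem~\ref{thm:StabilityFrankl} at all. After the same parity split and WLOG $|\mathcal{F}_o|\le|\mathcal{F}_e|$, it attaches to each $F_i\in\mathcal{F}_e$ the multilinear reduction of $\prod_{\ell=1}^k(\boldsymbol{x}\cdot\boldsymbol{b}_i+(\boldsymbol{1}-\boldsymbol{x})\cdot\boldsymbol{a}_i-2\ell)$, and then \emph{augments} this family with the monomials $\prod_{j\in T}x_j$ for every $T\in\binom{[n]}{k}\setminus\partial_k\mathcal{F}_e$; linear independence yields $|\mathcal{F}_e|\le\sum_{i\le k}\binom{n}{i}-|\mathcal{S}|$ directly. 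If $|\mathcal{S}|\ge\binom{n-5k-1}{k}$ the first alternative follows from $|\mathcal{F}|\le 2|\mathcal{F}_e|$; otherwise a structural argument (finding $E$ with $|E\setminus F_1|=k$ and showing $|F\triangle(F_1\cap E)|\le k+1$ for all $F\in\mathcal{F}$) produces the translate of $\mathcal{K}(n,k+1)$. Part~(B) is then handled by applying Hilton--Milner to the $(k+1)$-uniform intersecting layer, again avoiding Frankl.

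\textbf{Trade-off.} Your proof is shorter if one is willing to import Frankl's stability theorem, and it makes transparent where the constant $5k+1$ comes from. The paper's proof is self-contained and is the vehicle for its main methodological contribution: adding non-shadow monomials to the usual polynomial family to squeeze out the $|\mathcal{S}|$ term. So both arguments are valid, but the paper's approach is the one that showcases the new technique advertised in the introduction.
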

Note that if $\mathcal{F}$ is not contained in any translate of $\mathcal{K}(n,k+1)$, our upper bound $|\mathcal{F}| \le 2\sum_{i=0}^{k}\binom{n}{i} - 2\binom{n-5k-1}{k}$ is better than the bound $|\mathcal{F}| \le 2\sum_{i=0}^{k}\binom{n-1}{i} - \binom{n-k-2}{k}+1$ in Theorem~\ref{thm:StabilityFrankl} when $n\ge 10k^{2}$. We remark that our method does also apply to the even case. However, it yields a slighly weaker bound $|\mathcal{F}| \le \sum_{i=0}^{k}\binom{n+1}{i} - \binom{n-5k}{k}$ when $\mathcal{F}$ is not a translate of $\mathcal{K}(n,k)$.


Another type of well-studied restricted intersection problem is to bound the set-wise differences. In 1983, Katona~\cite{1983Katona} asked if we require $|A\setminus B|\le k$ for any distinct $A,B\in\mathcal{F}$, then what is the largest such $\mathcal{F}$? Frankl~\cite{1985Frankl} proved that
for a set $L$ of non-negative integers, given a family $\mathcal{F}\subseteq 2^{[n]}$ so that for any distinct $A,B\in\mathcal{F}$, $|A\setminus B|\in L$, then $|\mathcal{F}|\le \sum_{i=0}^{|L|}\binom{n}{i}$. 

Our method also yields stability result for families with bounded set-wise differences as follows.
\begin{theorem}\label{thm:DifferenceStability}
Let $n,t,k$ be positive integers with $k<t\le\frac{k+n}{2}$ and $\mathcal{F}=\{F_1,F_2,\ldots,F_m\} \subseteq 2^{[n]}$ be a family with rank $t$ and $|F_i\setminus F_j| \le k$ for any $F_i,F_j \in \mathcal{F}$, then either $\mathcal{F}$ is trivially $(t-k)$-intersecting and $|\mathcal{F}|\le \sum_{i=0}^k\binom{n-(t-k)}{i}$, or $|\mathcal{F}| \le\sum_{i=0}^{k} \binom{n}{i} -\binom{n-t-2k}{k}$. In the latter case, if $n$ is sufficiently large in terms of $t$ and $k$, then $|\mathcal{F}|\le (1-c)\binom{n-(t-k)}{k},$ for some constant $c>0$.
\end{theorem}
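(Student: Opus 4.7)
My plan is a non-shadow refinement of the polynomial method of Frankl. Fix $F_0\in\mathcal{F}$ with $|F_0|=t$, order $\mathcal{F}$ by non-decreasing $|F|$, and for each $F\in\mathcal{F}$ consider the degree-$k$ polynomial (reduced modulo $x_i^2-x_i$)
\[
Q_F(\mathbf{x})=\prod_{j=1}^{k}\bigl(\langle v_F,\mathbf{x}\rangle-|F|+j\bigr),
\]
where $v_F\in\{0,1\}^n$ is the characteristic vector of $F$. If $F'$ strictly precedes $F$ in the ordering, then $F\not\subseteq F'$, so $|F\setminus F'|\in\{1,\ldots,k\}$ and some factor vanishes, giving $Q_F(v_{F'})=0$; meanwhile $Q_F(v_F)=k!\ne 0$. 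Thus $\{Q_F\}_{F\in\mathcal{F}}$ is triangular and linearly independent in the space $V$ of multilinear polynomials of degree at most $k$, of dimension $D=\sum_{i=0}^{k}\binom{n}{i}$. Triangularity additionally implies the evaluation map $\mathrm{ev}\colon V\to\mathbb{R}^{\mathcal{F}}$, $P\mapsto(P(v_F))_{F\in\mathcal{F}}$, is surjective, so $|\mathcal{F}|=D-\dim\ker(\mathrm{ev})$ and it suffices to bound $\dim\ker(\mathrm{ev})$ from below by the stability term.

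If $\mathcal{F}$ is trivially $(t-k)$-intersecting with common $A$, $|A|=t-k$, then the injection $F\mapsto F\setminus A$ lands in the $\le k$ subsets of $[n]\setminus A$ and gives the first alternative. In the non-trivial case I aim to exhibit $\binom{n-t-2k}{k}$ linearly independent kernel elements via \emph{non-shadow monomials}: a set $T\subseteq[n]$, $|T|\le k$, with $T\not\subseteq F$ for every $F\in\mathcal{F}$ yields $x_T\in\ker(\mathrm{ev})$, and distinct $x_T$'s are linearly independent. I would produce them uniformly by searching for $V\subseteq[n]\setminus F_0$ of size $2k$ such that setting $W:=F_0\cup V$ forces $|F\setminus W|\le k-1$ for every $F\in\mathcal{F}$; then every $k$-subset of $[n]\setminus W$ is a non-shadow, providing the required $\binom{n-t-2k}{k}$ monomials. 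Using $|F\cap F_0|\ge t-k$ and $|F\setminus F_0|\le k$, the condition on $V$ simplifies to the requirement that $V$ be a $2k$-transversal of the $k$-uniform family
\[
\mathcal{S}_{F_0}\;=\;\bigl\{F\setminus F_0 : F\in\mathcal{F}\setminus\{F_0\},\ |F|=t,\ |F\cap F_0|=t-k\bigr\}.
\]

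The main obstacle is the combinatorial claim that, whenever $\mathcal{F}$ is not trivially $(t-k)$-intersecting, some choice of $F_0$ achieves $\tau(\mathcal{S}_{F_0})\le 2k$. My approach is contrapositive: if $\tau(\mathcal{S}_{F_0})>2k$ then, since $\tau\le k\nu$ for $k$-uniform hypergraphs, $\mathcal{S}_{F_0}$ carries a matching $S_1,\ldots,S_{\nu}$ with $\nu\ge 3$, and from $S_i\cap S_j=\emptyset$ together with $|F_i\setminus F_j|\le k$ (applied to the corresponding $F_i=(F_i\cap F_0)\cup S_i$) one deduces $F_i\cap F_0=F_j\cap F_0=:A$, a common $(t-k)$-core. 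Averaging $|F'\setminus F_i|\le k$ over $i$ for any $F'\in\mathcal{F}$ and using disjointness of the $S_i$ together with $|F'\setminus F_0|\le k$ then gives $|F'\cap A|\ge|F'|-k-k/\nu$, which for $\nu\ge k+1$ forces $A\subseteq F'$ for every $F'\in\mathcal{F}$, contradicting non-triviality. I expect the hardest regime to be $3\le\nu\le k$, where this count alone is insufficient; the plan there is to exploit connectivity of the disjointness graph on $\mathcal{S}_{F_0}$ (allowing the common-core relation to propagate between any two sets via a disjoint witness) and, if necessary, re-choose $F_0$ among all $t$-element members of $\mathcal{F}$. The asymptotic bound is then a direct corollary: $\sum_{i=0}^{k}\binom{n}{i}-\binom{n-t-2k}{k}=\Theta(n^{k-1})$ while $\binom{n-(t-k)}{k}=\Theta(n^{k})$, so $(1-c)\binom{n-(t-k)}{k}$ follows for large $n$.
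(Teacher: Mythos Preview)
Your polynomial setup and the non-shadow monomial idea match the paper exactly: the paper also takes the multilinear reductions of $\prod_{j=1}^{k}(\boldsymbol{x}\cdot\boldsymbol{b}_i-j)$, adds the monomials $h_T=\prod_{i\in T}x_i$ for $T\in\binom{[n]}{k}\setminus\partial_k\mathcal{F}$, and obtains $|\mathcal{F}|\le\sum_{i\le k}\binom{n}{i}-|\mathcal{S}|$ where $\mathcal{S}=\binom{[n]}{k}\setminus\partial_k\mathcal{F}$. The divergence is in how you pass from this inequality to the stated dichotomy.

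Your plan is to \emph{construct}, in the non-trivial case, a set $W=F_0\cup V$ with $|V|=2k$ so that $\binom{[n]\setminus W}{k}\subseteq\mathcal{S}$, which amounts to proving $\tau(\mathcal{S}_{F_0})\le 2k$. That transversal claim is where the argument breaks. From $\tau>2k$ and $\tau\le k\nu$ you only get $\nu\ge 3$, not $\nu\ge k+1$; your averaging bound $|A\setminus F'|\le k/\nu$ therefore forces $A\subseteq F'$ only when $\nu\ge k+1$. For $3\le\nu\le k$ you have no argument --- the ``connectivity of the disjointness graph'' only propagates the common core within a connected component, not across components, and ``re-choose $F_0$'' is not a proof. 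Even if one grants what you \emph{have} shown (non-trivial $\Rightarrow\nu\le k\Rightarrow\tau\le k^2$), you would obtain $\binom{n-t-k^2}{k}$ non-shadows, not $\binom{n-t-2k}{k}$, so the theorem as stated would still not follow.

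The paper sidesteps the transversal question entirely by running the dichotomy the other way: either $|\mathcal{S}|\ge\binom{n-t-2k}{k}$ (second alternative immediately), or $|\mathcal{S}|<\binom{n-t-2k}{k}$, and then every subset of $[n]$ of size at least $n-t-2k$ contains a $k$-set that \emph{is} a shadow. This is used dynamically: given any two sets $F,F'\in\mathcal{F}$ besides $F_1$, one has $|F_1\cup F\cup F'|\le t+2k$, so there exists $E\in\mathcal{F}$ with $E\setminus F_1$ a $k$-set disjoint from both $F$ and $F'$. Two applications of this ``fresh witness'' trick give first $F'\cap F_1=A$ whenever $|F'\setminus F_1|=k$, and then $A\subseteq F'$ for every $F'\in\mathcal{F}$, i.e.\ triviality. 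No global transversal is needed; the smallness of $\mathcal{S}$ supplies a new disjoint witness each time one is required.
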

We remark that the upper bound on the rank $t\le \frac{k+n}{2}$ is not restrictive. Indeed, for any sets $A,B$, $|A\setminus B|=|B^c\setminus A^c|$. Thus, when $t>\frac{k+n}{2}$, we can consider the family consisting of \emph{complement} $F^c$ of all sets $F\in \mathcal{F}$ instead.

For families with set-wise differences lying in $L$, where $|L|=k$, Frankl~\cite{1985Frankl} conjectured that if the family is Sperner, then the upper bound $|\mathcal{F}|\le \sum_{i=0}^{k}\binom{n}{i}$ can be improved to $\binom{n}{k}$. As a corollary, we show that this conjecture holds for $L=\{1,2,\ldots,k\}$ and large $n$. The following corollary can also be deduced from the main results in~\cite{2017Frankl}.
\begin{cor}\label{coro:main}
     Given integer $k$ and Sperner family $\mathcal{F}\subseteq 2^{[n]}$ with $|A\setminus B| \le k$ for any $A,B \in \mathcal{F}$, if $n$ is sufficiently large, then $|\mathcal{F}| \le \binom{n}{k}$, with equality holds if and only if $\mathcal{F} = \binom{[n]}{k}$.   
\end{cor}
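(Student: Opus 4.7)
The plan is to derive the bound by splitting on the rank $t = \max_{F \in \mathcal{F}} |F|$ of $\mathcal{F}$, invoking Theorem~\ref{thm:DifferenceStability} in the central range of $t$ and treating the small- and large-rank extremes separately. I would consider three cases: $t \le k$, $k < t \le (n+k)/2$, and $t > (n+k)/2$.

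If $t \le k$, then every $F \in \mathcal{F}$ has size at most $k \le n/2$, so $\binom{n}{|F|} \le \binom{n}{k}$. The LYM inequality applied to the Sperner family $\mathcal{F}$ yields
\[
\frac{|\mathcal{F}|}{\binom{n}{k}} \;\le\; \sum_{F \in \mathcal{F}} \frac{1}{\binom{n}{|F|}} \;\le\; 1,
\]
so $|\mathcal{F}| \le \binom{n}{k}$, with equality forcing every $F$ to have size exactly $k$ and hence $\mathcal{F} = \binom{[n]}{k}$.

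If $k < t \le (n+k)/2$, Theorem~\ref{thm:DifferenceStability} applies. In its trivially-intersecting alternative, a common set $S$ of size $t - k \ge 1$ is shared by every $F \in \mathcal{F}$; then $\{F \setminus S : F \in \mathcal{F}\}$ is a Sperner family of rank at most $k$ on $[n] \setminus S$, and the first case delivers $|\mathcal{F}| \le \binom{n-(t-k)}{k} < \binom{n}{k}$. Otherwise Theorem~\ref{thm:DifferenceStability} gives $|\mathcal{F}| \le (1-c)\binom{n-(t-k)}{k} < \binom{n}{k}$ for $n$ sufficiently large in terms of $t$ and $k$.

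The final case $t > (n+k)/2$ I handle by complementation. Fixing some $F_0 \in \mathcal{F}$ of size $t$, the Sperner condition combined with $|F_0 \setminus F| \le k$ forces $|F| \ge |F \cap F_0| + 1 \ge t - k + 1$ for every other $F \in \mathcal{F}$, so the complement family $\mathcal{F}^c = \{F^c : F \in \mathcal{F}\}$ is Sperner, satisfies $|A^c \setminus B^c| = |B \setminus A| \le k$, and has rank at most $n - t + k - 1 \le (n+k)/2 - 2$. Applying the first two cases to $\mathcal{F}^c$ then yields $|\mathcal{F}| = |\mathcal{F}^c| \le \binom{n}{k}$. Since Theorem~\ref{thm:DifferenceStability} already encapsulates the main technical content, I do not foresee any substantive obstacle; the hardest part is merely bookkeeping at the case boundaries and tracking the equality case through the complementation step (in particular, ruling out spurious extremal configurations coming from $\mathcal{F}^c$).
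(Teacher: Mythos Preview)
Your three-case split on the rank $t$ is exactly the paper's strategy, and your treatment of $t\le k$ (LYM) and of $t>(n+k)/2$ (pass to complements) matches the paper's proof. There is, however, a real gap in the middle case $k<t\le (n+k)/2$. When $\mathcal{F}$ is not trivially $(t-k)$-intersecting you appeal to the final clause of Theorem~\ref{thm:DifferenceStability}, namely $|\mathcal{F}|\le(1-c)\binom{n-(t-k)}{k}$ ``for $n$ sufficiently large in terms of $t$ and $k$''. But $t$ is not a fixed parameter here: it ranges up to $(n+k)/2$, so demanding that $n$ be large relative to $t$ is circular. Concretely, for $t\approx n/2$ the unconditional bound in Theorem~\ref{thm:DifferenceStability} reads roughly $\binom{n}{k}-\binom{n/2}{k}$, which is far larger than $(1-c)\binom{n/2}{k}$; the clause you invoke simply does not apply in that regime.

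The fix is immediate and is precisely what the paper does: use the explicit second alternative $|\mathcal{F}|\le\sum_{i=0}^{k}\binom{n}{i}-\binom{n-t-2k}{k}$, which carries no side hypothesis, and substitute the worst case $n-t-2k\ge (n-5k)/2$. One then checks directly that
\[
\sum_{i=0}^{k}\binom{n}{i}-\binom{(n-5k)/2}{k}<\binom{n}{k}
\]
once $n$ is large in terms of $k$ alone, since the subtracted term is $\Theta_k(n^{k})$ while $\sum_{i=0}^{k-1}\binom{n}{i}=O_k(n^{k-1})$. One further remark on the equality bookkeeping you flag: the configuration $\mathcal{F}=\binom{[n]}{n-k}$ also satisfies all hypotheses and has size $\binom{n}{k}$, so it cannot be ``ruled out'' after complementation; the paper's proof glosses over this as well.
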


\subsection{Our method}
Linear algebra method has proven to be a powerful tool in extremal set theory, see e.g.,~\cite{1991AlonBabaiSuzuki,1988Babai, 1983Bannai, 1984Blokhuis, 2009JCTAChen, 2007EUJC, 2007JACMubayi, 1975Ray,2003Snevily}. Given a family $\C F$, in this approach we associate members of $\C F$ with multilinear polynomials that are linearly independent. We can then bound the cardinality of the family by the dimension of the underlying space. Improvements can often be made by cleverly adding an extra set of polynomials, which together with the original ones are still linearly independent. 

We introduce a new twist to this classical method, utilizing the non-shadows of the family to find a large collection of polynomials that can be added. There are several advantages of our variation. First of all, the linear independence can usually be verified easily. A more important feature is that, as the new polynomials are associated with non-shadows of $\C F$, we can gather additional structural information of $\C F$, which not only offers optimal bounds in many scenarios but also provides a stability result as shown in Theorems~\ref{thm:SymmetricStability} and~\ref{thm:DifferenceStability}.



\medskip

\noindent\textbf{Organization.} The rest of this paper is organized as follows. In Section~\ref{sec:pre} we will collect some useful tools and give a short proof of a special case of the Katona intersection theorem (Theorem~\ref{thm:shadow}) to illustrate our method. We will prove the stability results, Theorems~\ref{thm:SymmetricStability} and~\ref{thm:DifferenceStability}, in Section~\ref{sec:Stability}. The proofs of Theorems~\ref{thm :0} and~\ref{thm:large} will be presented in Section~\ref{sec:Snevily}.\\

{\bf \noindent Notations.} In this paper, we usually regard $[n]=\{1,2,\ldots,n\}$ as the ground set. For a set $A\subseteq [n]$, we write $A^{c}$ for its complement, that is, $A^{c}=[n]\setminus A$. For a subset $A\subseteq [n]$, we will use $\binom{A}{k}$ to denote the family of all subsets of $A$ with size $k$ and $\binom{A}{\leq k}$ to denote the family of all subsets of $A$ with size at most $k$. For a pair of vectors $\boldsymbol{x}=\{x_{1},x_{2},\ldots,x_{n}\}$ and $\boldsymbol{y}=\{y_{1},y_{2},\ldots,y_{n}\}$, we define their inner product as $\boldsymbol{x}\cdot\boldsymbol{y}=\sum_{i=1}^{n}x_{i}y_{i}$. We use $\boldsymbol{1}_n$ to denote the length-$n$ all-ones vector $(1,1,\ldots,1)$ and omit the subscript when the dimension is clear. Given a polynomial $f$ in $n$ variables $x_{1},x_{2},\ldots,x_{n}$, define its \emph{multilinear reduction} $\tilde{f}$ to be the polynomial obtained from $f$ by replacing each $x_{i}^{t}$ term by $x_{i}$ for any positive integer $t$ and $1\le i\le n$. For a family $\mathcal{F}\subseteq 2^{[n]}$, we denote the \emph{$k$-shadow} of $\mathcal{F}$ as $\partial_{k}{\mathcal{F}}:=\{T\in\binom{[n]}{k}:T\subseteq F\ \text{for\ some\ }F\in\mathcal{F}\}$.

\section{Preliminaries}\label{sec:pre}
\subsection{Some useful tools}
Hilton and Milner~\cite{1967Hilton} showed the following result, which states that the size of non-trivial intersecting families is noticeably smaller than that of the trivial ones.

\begin{theorem}[Hilton-Milner~\cite{1967Hilton}]\label{thm:HiltonMilner}
Let $n,k$ be positive integers with $n>2k$. If $\mathcal{F}\subseteq \binom{[n]}{k}$ is an intersecting family and $\bigcap_{F\in\mathcal{F}}F=\emptyset$, then we have
\begin{equation*}
  |\mathcal{F}| \le \binom{n-1}{k-1} - \binom{n-k-1}{k-1}+1.   
\end{equation*}
\end{theorem}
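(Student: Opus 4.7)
The plan is to prove the Hilton--Milner theorem via the shifting method. First, I apply the standard left-shifting operators $S_{ij}$ (for $i<j$) to $\mathcal{F}$: whenever $F \in \mathcal{F}$ has $j \in F$ and $i \notin F$, replace $F$ by $(F \setminus \{j\}) \cup \{i\}$ unless this set already belongs to $\mathcal{F}$. These operators preserve $k$-uniformity, cardinality, and the intersecting property. Some care is needed to preserve non-triviality of the total intersection: if $\bigcap_{F \in \mathcal{F}} F$ becomes $[t]$ for some $t \geq 1$ after shifting, I restrict to ground set $[t+1, n]$ and reduce to a smaller non-trivial instance. Thus I may assume $\mathcal{F}$ is left-shifted with $\bigcap_{F \in \mathcal{F}} F = \emptyset$.

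Since some $F \in \mathcal{F}$ omits element $1$, shiftedness forces $A := \{2, 3, \ldots, k+1\} \in \mathcal{F}$, and hence every $F \in \mathcal{F}$ must meet $A$. I split $\mathcal{F} = \mathcal{F}_{1} \sqcup \mathcal{F}_{0}$ according to membership of $1$. Since each $F \in \mathcal{F}_{1}$ contains $1$ and meets $[2, k+1]$,
\[
|\mathcal{F}_{1}| \le \binom{n-1}{k-1} - \binom{n-k-1}{k-1},
\]
namely all $k$-sets through $1$ minus those disjoint from $[2, k+1]$. The remaining task is to show $|\mathcal{F}_{0}| \le 1 + \Delta$, where $\Delta := \bigl(\binom{n-1}{k-1} - \binom{n-k-1}{k-1}\bigr) - |\mathcal{F}_{1}|$ is exactly the deficit in the bound above.

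I would carry this out via an injection argument: exhibit a map
\[
\phi \colon \mathcal{F}_{0} \setminus \{A\} \longrightarrow \bigl\{F \in \tbinom{[n]}{k} : 1 \in F,\ F \cap [2, k+1] \neq \emptyset\bigr\} \setminus \mathcal{F}_{1},
\]
obtained by swapping a judiciously chosen element of $F \in \mathcal{F}_{0} \setminus \{A\}$ for the element $1$, and argue via the shifted/intersecting structure that $\phi$ is well-defined and injective. Summing with the previous bound then gives $|\mathcal{F}| \le \binom{n-1}{k-1} - \binom{n-k-1}{k-1} + 1$, with equality forcing $\mathcal{F}_{0} = \{A\}$ and $\mathcal{F}_{1}$ to consist of all $1$-containing $k$-sets meeting $A$---the canonical Hilton--Milner extremal configuration. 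The main obstacle is designing $\phi$: naive choices (such as removing $\max F$ or $\min F$) collide whenever two shifted sets agree except in one position, so the swap rule must depend on the global shifted structure of $\mathcal{F}_{0}$, and the verification that the image avoids $\mathcal{F}_{1}$ uses that any collision with $\mathcal{F}_{1}$ would contradict maximal left-shiftedness.
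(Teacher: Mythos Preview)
The paper does not prove the Hilton--Milner theorem; it is quoted in Section~\ref{sec:pre} as a known tool from~\cite{1967Hilton} and invoked as a black box in the proof of Theorem~\ref{thm:SymmetricStability}. So there is no proof in the paper to compare your argument against.

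On its own merits, your sketch follows the standard shifting strategy and correctly isolates the decomposition $\mathcal{F}=\mathcal{F}_1\sqcup\mathcal{F}_0$ together with the bound $|\mathcal{F}_1|\le\binom{n-1}{k-1}-\binom{n-k-1}{k-1}$, but two genuine gaps remain. First, your device for preserving non-triviality---``if $\bigcap_{F}F$ becomes $[t]$, restrict to ground set $[t+1,n]$''---does not work: once you delete $[t]$ from every set, the resulting $(k-t)$-uniform family need not be intersecting (two sets $F,F'$ with $F\cap F'=[t]$ become disjoint), so there is no smaller Hilton--Milner instance to recurse on. The usual remedy is to shift only with $S_{ij}$ for $2\le i<j\le n$, which leaves membership of the element~$1$ untouched and hence preserves the property ``some set omits $1$'' throughout.

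Second, and more seriously, you never actually construct the injection $\phi\colon \mathcal{F}_0\setminus\{A\}\to\{F\ni 1: F\cap A\neq\emptyset\}\setminus\mathcal{F}_1$. You correctly note that naive swaps collide, but the closing sentence (``the swap rule must depend on the global shifted structure \ldots\ and any collision with $\mathcal{F}_1$ would contradict maximal left-shiftedness'') is a hope, not an argument---and in light of the first gap you cannot even assume $S_{1j}$-stability. Controlling $|\mathcal{F}_0|$ against the deficit $\Delta$ is precisely the heart of the theorem; the known short proofs (e.g.\ Frankl--F\"uredi) do this via a cross-intersecting inequality for the pair $\bigl(\mathcal{F}_0,\{F\setminus\{1\}:F\in\mathcal{F}_1\}\bigr)$ in $\binom{[2,n]}{k}\times\binom{[2,n]}{k-1}$ rather than an elementwise injection. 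As it stands, your proposal is a correct outline with the hard step left open.
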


For Sperner families, the Lubell–Yamamoto–Meshalkin inequality, discovered by Bollob\'{a}s~\cite{1965Bollobas}, Lubell~\cite{1966Lubell}, Me\v{s}alkin~\cite{1963Me} and Yamamoto~\cite{1954Ya} independently, is useful.
\begin{theorem}[LYM inequality]\label{thm:LYM}
     Let $\mathcal{F}\subseteq 2^{[n]}$ be a Sperner family of subsets of $[n]$, then 
     \begin{equation*}
         \sum\limits_{A\in\mathcal{F}}\frac{1}{\binom{n}{|A|}}\le 1.
     \end{equation*}
\end{theorem}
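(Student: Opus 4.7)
The plan is to prove the LYM inequality by double counting maximal chains in the Boolean lattice $2^{[n]}$. A maximal chain here is a sequence $\emptyset = C_0 \subsetneq C_1 \subsetneq \cdots \subsetneq C_n = [n]$ with $|C_i|=i$, which corresponds bijectively to a permutation of $[n]$ (the permutation records in which order the elements are added). Hence there are exactly $n!$ maximal chains.

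First I would count, for a fixed set $A \subseteq [n]$ with $|A|=k$, the number of maximal chains that contain $A$. Such a chain is obtained by choosing a maximal chain inside $2^A$ (there are $k!$ of these, corresponding to permutations of $A$) together with a maximal chain in the interval $[A,[n]]$ going from $A$ to $[n]$ (there are $(n-k)!$ of these, corresponding to permutations of $[n]\setminus A$). Thus every $A$ of size $k$ is contained in exactly $k!(n-k)!$ maximal chains.

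Next I would use the Sperner property to show that each maximal chain contains at most one member of $\mathcal{F}$: if $A,B \in \mathcal{F}$ both lie on a single maximal chain then one is contained in the other, contradicting Sperner. Now I would count the pairs $(A,\sigma)$ with $A\in\mathcal{F}$ and $\sigma$ a maximal chain through $A$ in two ways. Summing over $A$ first gives $\sum_{A\in\mathcal{F}} |A|!(n-|A|)!$, while summing over $\sigma$ first gives at most $n!$. Dividing through by $n!$ yields
\[
\sum_{A\in\mathcal{F}} \frac{|A|!(n-|A|)!}{n!} = \sum_{A\in\mathcal{F}} \frac{1}{\binom{n}{|A|}} \le 1,
\]
which is the desired inequality.

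There is no real obstacle here; the only point that must be handled carefully is the claim that each maximal chain meets $\mathcal{F}$ in at most one element, which is an immediate consequence of the Sperner hypothesis since any two sets on a common maximal chain are comparable under inclusion. Equivalently, one could phrase the same argument probabilistically by choosing a uniformly random maximal chain and observing that the events ``$A$ lies on the chain'' for $A\in\mathcal{F}$ are pairwise disjoint, each occurring with probability $1/\binom{n}{|A|}$, so their probabilities sum to at most $1$.
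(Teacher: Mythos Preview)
Your argument is correct and is in fact the classical Lubell-style proof of the LYM inequality via counting maximal chains (equivalently, permutations). There is nothing to compare here: the paper states Theorem~\ref{thm:LYM} as a known tool in the preliminaries and does not supply its own proof, so your proposal simply fills in the standard argument.
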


We also need the triangular criterion when we want to prove a sequence of polynomials are linearly independent.

\begin{prop}\label{prop:triangular}
    Let $f_{1},f_{2},\ldots,f_{m}$ be functions in a linear space. If $v^{(1)},v^{(2)},\ldots,v^{(m)}$ are vectors such that $f_{i}(v^{(i)})\neq 0$ for $1\le i\le m$ and $f_{i}(v^{(j)})=0$ for $i>j$, then $f_{1},f_{2},\ldots,f_{m}$ are linearly independent.
\end{prop}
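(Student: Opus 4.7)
The plan is to prove linear independence directly from the definition by evaluating a hypothetical linear dependence at the distinguished vectors $v^{(1)}, v^{(2)}, \ldots, v^{(m)}$ in order, and peeling off the coefficients one at a time via induction. This is a standard triangularization argument, and the hypothesis is set up precisely so that the ``matrix'' whose $(i,j)$-entry is $f_i(v^{(j)})$ is upper triangular (reading rows as $i$, columns as $j$, with vanishing below the diagonal) with nonzero diagonal entries $f_i(v^{(i)})$.

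Concretely, I would suppose $\sum_{i=1}^{m} c_i f_i = 0$ as a function in the linear space, and aim to show $c_i = 0$ for all $i$ by induction on $j$, showing $c_j = 0$ at stage $j$. For the base case, evaluate the identity at $v^{(1)}$: by hypothesis $f_i(v^{(1)}) = 0$ for every $i > 1$, so all terms with $i \geq 2$ vanish and only $c_1 f_1(v^{(1)})$ survives. Since $f_1(v^{(1)}) \neq 0$, this forces $c_1 = 0$. For the inductive step, assume $c_1 = c_2 = \cdots = c_{j-1} = 0$ and evaluate the dependence at $v^{(j)}$. The terms with $i < j$ drop out by the inductive hypothesis, and the terms with $i > j$ drop out because $f_i(v^{(j)}) = 0$ whenever $i > j$. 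What remains is $c_j f_j(v^{(j)}) = 0$, and since $f_j(v^{(j)}) \neq 0$, we conclude $c_j = 0$.

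There is no real obstacle here; the statement is essentially a book-keeping exercise, and the only thing one has to be careful about is orienting the triangular condition correctly (namely, that it is the evaluations $f_i(v^{(j)})$ with $i > j$ that vanish, so that evaluating at $v^{(j)}$ kills all higher-indexed $f_i$ and leaves exactly a single undetermined coefficient after the inductive hypothesis has been applied). Since the ambient ``functions in a linear space'' carries no further structure, the argument never leaves the realm of evaluation at points, and the result follows immediately.
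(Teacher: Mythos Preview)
Your argument is correct and is the standard proof of this well-known criterion. The paper actually states Proposition~\ref{prop:triangular} without proof, treating it as a standard tool, so there is nothing to compare against; your write-up would serve perfectly well as the omitted justification.
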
 

\subsection{Warm up: Katona intersection theorem}
As a warm-up and illustration of our method, we will give a short proof of a special case of the Katona intersection theorem. 
\begin{theorem}[Katona~\cite{1964Katona}]\label{thm:shadow}
    Suppose $\mathcal{F} \subseteq \binom{[n]}{k+1}$ is an intersecting family. Then $|\partial_{k}{\mathcal{F}}| \ge |\mathcal{F}|$.
\end{theorem}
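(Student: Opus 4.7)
The plan is to reduce the shadow inequality $|\partial_{k}\mathcal{F}| \ge |\mathcal{F}|$ to showing $|\mathcal{F}| + |\mathcal{N}| \le \binom{n}{k}$, where $\mathcal{N} := \binom{[n]}{k} \setminus \partial_{k}\mathcal{F}$ is the set of non-shadows. This complementary reformulation is tailor-made for the non-shadow polynomial method announced in the introduction: I will exhibit $|\mathcal{F}| + |\mathcal{N}|$ linearly independent vectors in the ambient space $V := \mathbb{R}^{\binom{[n]}{k}}$, whose dimension is exactly $\binom{n}{k}$.

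For each $F \in \mathcal{F}$, I would define $v_{F} \in V$ by $v_{F}(T) = [T \subseteq F]$ for $T \in \binom{[n]}{k}$, and for each $N \in \mathcal{N}$ let $u_{N} \in V$ be the coordinate indicator $e_{N}$. Since $v_{F}$ is supported on $\partial_{k}\{F\} \subseteq \partial_{k}\mathcal{F}$ while $u_{N}$ is supported on the complementary set $\mathcal{N}$, any linear combination of the two families splits cleanly across the partition $\binom{[n]}{k} = \partial_{k}\mathcal{F} \sqcup \mathcal{N}$. Because the $u_{N}$'s are already independent among themselves, joint linear independence of $\{v_{F}\}_{F \in \mathcal{F}} \cup \{u_{N}\}_{N \in \mathcal{N}}$ reduces to the linear independence of the $v_{F}$'s alone.

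To establish this last claim, I plan to construct, for each target $F^{*} \in \mathcal{F}$, a dual linear functional $\psi_{F^{*}} \colon V \to \mathbb{R}$ of the form
\[
\psi_{F^{*}}(g) \;=\; \sum_{T \in \binom{[n]}{k}} f\bigl(|T \cap F^{*}|\bigr)\, g(T),
\]
with a carefully chosen weight $f \colon \{0,1,\ldots,k\} \to \mathbb{R}$. Grouping the sum $\psi_{F^{*}}(v_{F}) = \sum_{T \in \binom{F}{k}} f(|T \cap F^{*}|)$ by $j := |T \cap F^{*}|$ yields
\[
\psi_{F^{*}}(v_{F}) \;=\; \sum_{j=0}^{k} f(j)\binom{m}{j}\binom{k+1-m}{k-j} \;=:\; G(m), \qquad m := |F \cap F^{*}|,
\]
so $G$ is a polynomial of degree at most $k$ in the single variable $m$. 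I would then pick $f$ so that $G(m) = \binom{m-1}{k}$, which vanishes at $m = 1, 2, \ldots, k$ and equals $1$ at $m = k+1$; such an $f$ exists because the polynomials $\binom{m}{j}\binom{k+1-m}{k-j}$, $j = 0, \ldots, k$, form a basis of the polynomials of degree at most $k$ in $m$.

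With $\psi_{F^{*}}$ in hand, the intersecting hypothesis performs the final step: for distinct $F, F^{*} \in \mathcal{F}$ we have $1 \le |F \cap F^{*}| \le k$, so $\psi_{F^{*}}(v_{F}) = G(|F \cap F^{*}|) = 0$, while $\psi_{F^{*}}(v_{F^{*}}) = G(k+1) = 1$. Thus the matrix $[\psi_{F^{*}}(v_{F})]$ is the identity, the $v_{F}$'s are linearly independent, and $|\mathcal{F}| + |\mathcal{N}| \le \dim V = \binom{n}{k}$, yielding $|\partial_{k}\mathcal{F}| \ge |\mathcal{F}|$. The step I expect to be the main obstacle is designing the weight $f$: the key observation is that, thanks to the uniformity $|F| = k+1$, the quantity $\sum_{T \subseteq F} f(|T \cap F^{*}|)$ depends on $F$ only through the single scalar $m = |F \cap F^{*}|$, turning a potentially intricate condition into a one-variable polynomial interpolation problem whose $k$ forced roots are exactly supplied by the intersecting hypothesis.
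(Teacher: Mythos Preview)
Your argument is correct. Both you and the paper reduce to showing $|\mathcal{F}| + |\mathcal{N}| \le \binom{n}{k}$ via linear independence, with the non-shadows contributing the ``easy'' summand; the difference lies in the ambient space and in how the $\mathcal{F}$-part is handled. The paper works in the $\sum_{i=0}^{k}\binom{n}{i}$-dimensional space of multilinear polynomials of degree at most $k$: to each $F_i$ it attaches the explicit polynomial $\tilde g_i$ with $g_i(\boldsymbol{x})=\prod_{j=1}^{k}(\boldsymbol{x}\cdot\boldsymbol{b}_i-j)$ (so $f_i(\boldsymbol{a}_j)$ depends only on $|F_j\setminus F_i|\in[k]$), checks a triangular criterion, and then must pad with an auxiliary family indexed by $\binom{[n]}{\le k-1}$ to absorb the excess dimension. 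You instead work directly in $\mathbb{R}^{\binom{[n]}{k}}$, which eliminates the padding but requires you to manufacture the dual functionals $\psi_{F^{*}}$ by interpolation; your basis claim for $\{\binom{m}{j}\binom{k+1-m}{k-j}\}_{j=0}^{k}$ is sound (evaluate at $m=0,1,\ldots,k$ to get an upper-triangular matrix with nonzero diagonal $k+1-j$). The paper's version gives explicit witnesses and a template it reuses throughout the other proofs; yours is tighter and more self-contained for this single statement.
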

\begin{proof}[Proof of Theorem~\ref{thm:shadow}]
 Suppose $\mathcal{F}=\{F_{1},F_{2},\ldots,F_{m}\} \subseteq \binom{[n]}{k+1}$ is an intersecting family. Let $\boldsymbol a_{i},\boldsymbol b_{i}$ be the characteristic vectors of $F_i, F_i^c$, respectively. For $\boldsymbol x=(x_1,\ldots,x_n)$ and $i\in [m]$, we define $g_i(\boldsymbol x) =\prod_{j=1}^{k} (\boldsymbol x \cdot \boldsymbol b_i -j)$ and let $f_i=\tilde{g}_{i}$ be the multilinear reduction of $g_i$. Note that $\ker(f_{i})$ corresponds to characteristic vectors of subsets $F\subseteq [n]$ with $|F\setminus F_{i}|\in [k]$,  

Let $\{S_1,\ldots,S_r\} = \binom{[n]}{\le k-1}$ with $|S_i|\le |S_j|$ for $i< j$, so $r = \sum_{i=0}^{k-1}\binom{n}{i}$.
For each $S_{i}$, let $f_{S_i}$ be the multilinear reduction of $g_{S_i}(\boldsymbol x):=(\boldsymbol{x}\cdot \boldsymbol{1} -k-1)\cdot\prod_{\ell\in S_i }x_{\ell}$, and $\boldsymbol a_{S_i}$ be the characteristic vector of $S_i$. Let $\{T_1, T_2,\ldots,T_h\} =\binom{[n]}{k}\setminus \partial_{k}{\mathcal{F}}$ be the collection of all $k$-sets that are not $k$-shadow of $\mathcal{F}$. For each $T_{i}$, we define $f_{T_i}(\boldsymbol{x}):= \prod_{j\in T_i }x_j$, and  $\boldsymbol a_{T_i}$ to be the characteristic vector of $T_i$. Here we can see that $\ker(f_{T_{i}})$ corresponds to characteristic vectors of subsets of $[n]$ which do not contain $T_{i}$, and $\ker(g_{S_{i}})$ corresponds to characteristic vectors of subsets of $[n]$ which contain $k+1$ elements or do not contain $S_{i}$.

\begin{claim}\label{claim:S7}
   The polynomials $\{f_1,\ldots,f_m,f_{S_1},\ldots,f_{S_r},f_{T_1},\ldots,f_{T_h} \}$ are linearly independent. 
\end{claim} 
\begin{poc}
    We shall show that these polynomials satisfy the triangular criterion in Proposition~\ref{prop:triangular}.
First note that $f_i(\boldsymbol a_i) =(-1)^{k}k! \ne 0$ as $\boldsymbol a_{i}\cdot \boldsymbol b_{i}=0$ for each $i$. For $i\ne j$, we have $f_i(
\boldsymbol a_j)=0$ as $\boldsymbol a_{j}\cdot \boldsymbol b_{i}=|F_{j}\cap F_{i}^{c}|=|F_j\setminus F_i| \in [k]$.
Next, $f_{S_i}(\boldsymbol a_{S_i}) = (|S_i|-k-1) \ne 0$. Since $|F_i|=k+1$, we know that $f_{S_i}(\boldsymbol a_j) =0$. By definition, if $i<j$, we have $|S_i| \le |S_j|$ and $S_{j}\setminus S_{i}\ne \emptyset$. Thus for any $i<j$, we have $f_{S_j}(\boldsymbol a_{S_i}) = 0$. For $f_{T_{i}}$'s, we have $f_{T_i}(\boldsymbol a_{T_i}) = 1 \ne 0$, and for any $T\in \{T_1,\ldots,T_{i-1},T_{i+1}\ldots,T_h,S_1,S_2,\ldots,S_{r}\}$, we can see $|T| \le |T_i|, T\ne T_i$ and $T_{i}\setminus T\ne \emptyset$, so we have $f_{T_i}(\boldsymbol{a}_{T}) =0$. Lastly, since $T_i \not\subseteq F_j$ for any $i,j$, we have $f_{T_i}(\boldsymbol a_{j}) = 0$. Hence, Proposition~\ref{prop:triangular} implies that $\{f_1,\ldots,f_m,f_{S_1},\ldots,f_{S_r},f_{T_1},\ldots,f_{T_h} \}$ are linearly independent.
\end{poc}
Since all of these polynomials have degree at most $k$, we have $|\mathcal{F}|+r+h\le \sum _{i=0}^{k}\binom{n}{i}$. As $r = \sum_{i=0}^{k-1}\binom{n}{i}$ and $h=\binom{n}{k} -|\partial_{k}{\mathcal{F}}|$, we have $|\mathcal{F}| \le |\partial_{k}{\mathcal{F}}|$ as desired.

\end{proof}

\section{Stability}\label{sec:Stability}
\subsection{Families with bounded symmetric differences}
In this section, we will prove Theorem~\ref{thm:SymmetricStability}.
Let $\mathcal{F}_o$ consist of all sets of odd size in $\mathcal{F}$ and $\mathcal{F}_e$ consist of all sets of even size in $\mathcal{F}$. Observe that for any subsets $X,Y,Z \subseteq [n]$, $(X\triangle Y)\triangle(X\triangle Z)= Y\triangle Z$. Hence, by translating all sets in $\C F$ by some appropriate $U\in \C F$, we may assume $\emptyset\in\mathcal{F}$ and $|F|\le 2k+1$ for any $F\in \mathcal{F}$. Then without loss of generality, we can further assume that $|\mathcal{F}_o|\le|\mathcal{F}_e|$.

Let $\mathcal{F}_e =\{F_1,\ldots, F_m\}$ and $\boldsymbol a_{i},\boldsymbol b_{i}$ be the characteristic vectors of $F_i, F_i^c$, respectively. For $\boldsymbol x=(x_1,\ldots,x_n)$ and $i\in [m]$, we define $g_i(\boldsymbol x) =\prod_{\ell=1}^{k} (\boldsymbol x \cdot \boldsymbol b_i + (\boldsymbol{1}-\boldsymbol x)\cdot \boldsymbol a_i-2\ell)$ and $f_i=\tilde{g}_{i}$. It is easy to see that $\ker(g_{i})$ corresponds to characteristic vectors of subsets $F\subseteq [n]$ with $|F_{i}\triangle F|\in\{2,4,\ldots,2k\}$.

By definition, $f_i(\boldsymbol a_i) = (-2)^k k! \ne 0$ as $\boldsymbol a_{i}\cdot \boldsymbol b_{i}=0$ for each $i$. As each set in $\mathcal{F}_{e}$ has even size, $\boldsymbol a_{i}\cdot \boldsymbol b_{j} + \boldsymbol b_i \cdot \boldsymbol a_j=|F_i\triangle F_j| \in \{2,4,\ldots,2k\}$ for any $i\ne j$, so we have $f_i(
\boldsymbol a_j)=0$. By Proposition~\ref{prop:triangular}, $\{f_i\}_{i=1}^{m}$ are linearly independent, which implies $m\le \sum_{i=0}^{k} \binom{n}{i}$ since the polynomials $\{f_i\}_{i=1}^{m}$ lie in the space of $n$-variate polynomials of degree at most $k$.

Let $\mathcal{S}= \binom{[n]}{k}\setminus\partial_{k}{\mathcal{F}}_{e}:= \{T\in \binom{[n]}{k} : T\not\subseteq F_i,\text{ for all } i\in[m]\}$. For each subset $T\in \mathcal{S}$, we define a polynomial $h_T(\boldsymbol x) =\prod_{i\in T} x_i$ and let $\boldsymbol a_T$ be the characteristic vector of $T$. Similarly, $\ker(h_{T})$ corresponds to characteristic vectors of subsets of $[n]$ which do not contain $T$ as a subset. The following claim will establish an upper bound $|\mathcal{F}_{e}|\leq \sum_{i=0}^{k} \binom{n}{i}-|\mathcal{S}|$.

\medskip
\begin{claim}\label{claim:S1}
   The polynomials $\{h_T\}_{T\in \mathcal{S}}$ and $\{f_i\}_{i=1}^{m}$ are linearly independent. 
\end{claim}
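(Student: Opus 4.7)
The plan is to verify the triangular criterion of Proposition~\ref{prop:triangular} with the polynomials ordered as $f_1, f_2, \ldots, f_m, h_{T_1}, h_{T_2}, \ldots, h_{T_{|\mathcal{S}|}}$ and with matching test vectors $\boldsymbol a_1, \ldots, \boldsymbol a_m, \boldsymbol a_{T_1}, \ldots, \boldsymbol a_{T_{|\mathcal{S}|}}$. This is essentially the same bookkeeping strategy used in the warm-up proof of Theorem~\ref{thm:shadow}, now adapted to the symmetric-difference polynomial $g_i$ rather than the one-sided version.

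The diagonal nonvanishing is already on the table: $f_i(\boldsymbol a_i) = (-2)^k k!$ was computed in the paragraph preceding the claim, and $h_T(\boldsymbol a_T) = 1$ is immediate from the definition of $h_T$. The off-diagonal vanishing I would split into three blocks, corresponding to the three types of (polynomial, vector) pairs.

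In the $f$-on-$\boldsymbol a$ block, for distinct $F_i, F_j \in \mathcal{F}_e$ both sets have even size, so $|F_i \triangle F_j|$ is a positive even integer at most $2k+1$, hence lies in $\{2, 4, \ldots, 2k\}$; this kills exactly one factor of $g_i$ when evaluated at $\boldsymbol a_j$. In the $h$-on-$\boldsymbol a$ block, the defining property $T \not\subseteq F_j$ of $T \in \mathcal{S}$ forces some coordinate of $\boldsymbol a_j$ indexed by $T$ to be zero, so $h_T(\boldsymbol a_j)=0$ regardless of the order in which the sets are listed. In the $h$-on-$h$ block, two distinct $k$-sets $T, T' \in \mathcal{S}$ automatically satisfy $T \not\subseteq T'$ purely for cardinality reasons, giving $h_T(\boldsymbol a_{T'})=0$.

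I do not anticipate a genuine obstacle here: the three blocks were arranged precisely so that the required triangular pattern holds, and each off-diagonal vanishing reduces to a one-line combinatorial observation. The conceptual content is not in the mechanics of this claim but in the fact that one may augment the usual Ray-Chaudhuri-Wilson-type set of polynomials by the monomials $h_T$ indexed by the non-shadow of $\mathcal{F}_e$ without destroying linear independence; this is exactly the twist advertised in the introduction, and the count $|\mathcal{F}_e| + |\mathcal{S}| \le \sum_{i=0}^{k}\binom{n}{i}$ will then drop out from the degree bound on these polynomials.
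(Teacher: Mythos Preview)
Your proposal is correct and follows essentially the same route as the paper's proof: both verify the triangular criterion by noting $h_T(\boldsymbol a_i)=0$ since $T\not\subseteq F_i$, and $h_T(\boldsymbol a_{T'})=0$ for distinct $k$-sets $T,T'\in\mathcal{S}$, with the $f$-block already handled in the paragraph preceding the claim. The paper's version is simply more terse.
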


\begin{poc}
For any $T\in \mathcal{S}$ and $i\in [m]$, since $T\not\subseteq F_i$, we have $h_T(\boldsymbol a_i) = 0$. For any $T,S\in \mathcal{S}$, it is easy to check that $ h_T(a_S)\neq 0$ if and only if $T=S$. By Proposition~\ref{prop:triangular}, $\{h_T\}_{T\in \mathcal{S}}$ and $\{f_i\}_{i=1}^{m}$ are linearly independent. 
\end{poc}
If $|\mathcal{S}|\ge \binom{n-5k-1}{k}$, then we have
\begin{equation*}
    |\mathcal{F}|\le 2|\mathcal{F}_e|\le 2\big(\sum_{i=0}^{k}\binom{n}{i} -|\mathcal{S}|\big)  \le 2\sum_{i=0}^{k}\binom{n}{i} - 2\binom{n-5k-1}{k},
\end{equation*}
yielding the first alternative. 

We may then assume $|\mathcal{S}|<\binom{n-5k-1}{k}$.
Relabelling $\mathcal{F}_{e}$ if necessary, we may assume $2k+1\ge |F_1|\ge |F_2|\ge \cdots \ge |F_m|$. As $|F_1|$ is maximum, we have $|F_i\setminus F_1|\le |F_1\setminus F_i|$ for any $1\le i\le m$. Since $|F_1\triangle F_i| \le 2k+1$ and both of $|F_1|$ and $|F_i|$ are even, we have $|F_i\setminus F_1|\le k$ and $|F_1|\le 2k$. We can then draw the following claim.

\begin{claim}\label{claim:S2}
If $|F_i\setminus F_1|= k$, then we have $|F_1\setminus F_i|=k$, and $|F_1|=|F_i|$. 
\end{claim}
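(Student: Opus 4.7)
The plan is to extract the equality $|F_1\setminus F_i|=k$ purely from three ingredients already on the table: the size bound $|F_1\triangle F_i|\le 2k+1$, the maximality of $|F_1|$, and the parity constraint that both $|F_1|$ and $|F_i|$ are even.

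First I would bound $|F_1\setminus F_i|$ from above. Since $|F_1\triangle F_i|=|F_1\setminus F_i|+|F_i\setminus F_1|\le 2k+1$, the hypothesis $|F_i\setminus F_1|=k$ immediately gives $|F_1\setminus F_i|\le k+1$. Next, from the paragraph preceding the claim, the maximality of $|F_1|$ yields $|F_i\setminus F_1|\le |F_1\setminus F_i|$, so $|F_1\setminus F_i|\ge k$. Therefore $|F_1\setminus F_i|\in\{k,k+1\}$.

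To rule out $|F_1\setminus F_i|=k+1$, I would invoke parity. The identity $|F_1|-|F_i|=|F_1\setminus F_i|-|F_i\setminus F_1|$ shows that if $|F_1\setminus F_i|=k+1$ and $|F_i\setminus F_1|=k$, then $|F_1|-|F_i|=1$, which is impossible since both $|F_1|$ and $|F_i|$ belong to $\mathcal{F}_e$ and hence are even. Consequently $|F_1\setminus F_i|=k=|F_i\setminus F_1|$, and the same identity then forces $|F_1|=|F_i|$.

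There is no real obstacle here; the only thing to keep straight is that we are using all three conditions simultaneously, and in particular the parity restriction to $\mathcal{F}_e$ is essential — without it the case $|F_1\setminus F_i|=k+1$ cannot be excluded. The short argument will be packaged as a two-line chain of inequalities followed by a one-line parity check.
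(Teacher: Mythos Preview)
Your proof is correct and follows essentially the same approach as the paper: the paper sets up precisely the three ingredients you use (maximality of $|F_1|$ giving $|F_i\setminus F_1|\le |F_1\setminus F_i|$, the bound $|F_1\triangle F_i|\le 2k+1$, and the evenness of $|F_1|,|F_i|$) in the paragraph immediately preceding the claim, and then states the claim as an immediate consequence without a separate proof block. Your write-up simply makes the final parity step explicit.
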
 

Since $|\mathcal{S}|< \binom{n-5k-1}{k}$ and $n-|F_1|> n-5k-1$, there exists some $k$-set $R\subseteq F_{1}^{c}$ such that $R\in \partial_{k}{\mathcal{F}_{e}}$. Consequently, there exists some $E\in \mathcal{F}_e$ such that $R\subseteq E$ and $|E\setminus F_1| \ge |R|= k$. In fact, $|E\setminus F_{1}|=k$ as $|E\setminus F_{1}|\le k$. Claim~\ref{claim:S2} then implies that $|F_{1}|=|E|$. Fix such $E$ and let $A: = F_1\cap E$.

\begin{claim}\label{claim:S3}
    For any $F\in \mathcal{F}$, we have $|F \triangle A|\le k+1$. 
\end{claim}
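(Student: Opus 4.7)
The plan is, for each $F\in\C F$, to produce a companion set of the form $A\cup T\in\C F_{e}$ with $T$ disjoint from $F$; compatibility of $F$ with this companion then immediately yields $|F\triangle A|\le k+1$. The structural step is the following: for every $k$-set $T\subseteq F_{1}^{c}\setminus R$ with $T\notin\C S$, one has $A\cup T\in\C F_{e}$. Indeed, $T\notin\C S$ yields some $E_{T}\in\C F_{e}$ containing $T$, whence $|E_{T}\setminus F_{1}|\ge k$. Combined with the bound $|E_{T}\setminus F_{1}|\le k$ already established for every $F_{i}\in\C F_{e}$, Claim~\ref{claim:S2} gives $E_{T}\setminus F_{1}=T$, $|E_{T}|=|F_{1}|$, and $A_{T}:=E_{T}\cap F_{1}$ has $|A_{T}|=|A|$. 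Writing $E=A\cup R$ and $E_{T}=A_{T}\cup T$ as disjoint unions and separating the $F_{1}$-parts from the $F_{1}^{c}$-parts, we obtain $E\triangle E_{T}=(A\triangle A_{T})\cup(R\triangle T)$ as a disjoint union, hence $|E\triangle E_{T}|=|A\triangle A_{T}|+|R\triangle T|$. Since $E,E_{T}\in\C F_{e}$, the quantity $|E\triangle E_{T}|$ is even and thus at most $2k$, whereas $R\cap T=\emptyset$ forces $|R\triangle T|=2k$. Therefore $|A\triangle A_{T}|=0$, so $A_{T}=A$ and $E_{T}=A\cup T$.

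Next, I would locate such a $T$ that in addition avoids $F$. From $|F_{1}|\le 2k$, $|F|\le 2k+1$, and $|R|=k$ we get $|F_{1}\cup F\cup R|\le 5k+1$, so $(F_{1}\cup F\cup R)^{c}$ contains at least $\binom{n-5k-1}{k}>|\C S|$ many $k$-subsets; pick $T$ among these lying outside $\C S$. By the structural step, $A\cup T\in\C F$, so $|F\triangle(A\cup T)|\le 2k+1$. Using $A\cap T=\emptyset$ (because $A\subseteq F_{1}$ and $T\subseteq F_{1}^{c}$) and $F\cap T=\emptyset$ by construction, a direct expansion gives
\[
|F\triangle(A\cup T)|=|F\triangle A|+|T|-2|(F\triangle A)\cap T|=|F\triangle A|+k,
\]
whence $|F\triangle A|\le k+1$, as required.

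The main subtlety lies in the structural step: extracting $A_{T}=A$ requires combining Claim~\ref{claim:S2} (which fixes the sizes of $A_T$ and $E_T\setminus F_1$) with the even parity of $|E\triangle E_{T}|$ (which shaves off the extra "$+1$" in the ambient bound $|E\triangle E_{T}|\le 2k+1$). Once $A_T=A$ is secured, the remaining ingredients are a simple count of admissible $T$'s and a routine symmetric-difference computation.
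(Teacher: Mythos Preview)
Your proof is correct and follows essentially the same approach as the paper's: locate a $k$-set $T\subseteq [n]\setminus(F_{1}\cup R\cup F)$ with $T\notin\C S$, show that the corresponding $E_{T}\in\C F_{e}$ must satisfy $E_{T}\cap F_{1}=A$ (so $E_{T}=A\cup T$), and then bound $|F\triangle(A\cup T)|\le 2k+1$ to conclude. The only cosmetic difference is in how $E_{T}\cap F_{1}=A$ is extracted: you argue via the parity of $|E\triangle E_{T}|$ together with $|R\triangle T|=2k$, while the paper argues by first getting $|T\triangle E|=2k$ from $|T|=|E|$ and $T\setminus F_{1}\subseteq T\setminus E$; these are the same computation in slightly different packaging.
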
 
\begin{poc}
Suppose that there exists $F\in \mathcal{F}$ such that $|F \triangle A|\ge k+2$. Since $|F_1|\le 2k$, $|E\setminus F_1|= k$ and $|F|\le 2k+1$, we have $|F_1\cup E \cup F|\le 5k+1$.
Since $|\mathcal{S}|< \binom{n-5k-1}{k}$, there exists some $k$-set $Q\subseteq [n]\setminus (F_1\cup E\cup F)$ such that $Q\in\partial_{k}{\mathcal{F}}_{e}$. Thus there is some $T\in\mathcal{F}_{e}$ such that $Q\subseteq T$ and $|T\cap ([n]\setminus (F_1\cup E\cup F))|\ge k$. As $|T\setminus F_1| \le k$, we have $T\cap ([n]\setminus (F_1\cup E\cup F)) = T\setminus F_1$. 
Therefore $T\setminus F_1$, $E\setminus A$ and $F_1\setminus A$ are pairwise disjoint sets of size $k$. By Claim~\ref{claim:S2}, $|T\triangle F_1|= 2k$, $|T\triangle E|= 2k$ and $T=A \cup (T\setminus F_1)=A\cup (T\cap([n]\setminus (F_{1}\cup E\cup F)))$. Then we have
\begin{equation*}
    |T\triangle F|= |T\cap ([n]\setminus (F_1\cup E\cup F))|+ |A\triangle F|\ge k+k+2=2k+2,
\end{equation*}
 a contradiction.
\end{poc}

By Claim~\ref{claim:S3}, if $|\mathcal{S}|<\binom{n-5k-1}{k}$, then $\mathcal{F}$ should be contained in a translate of $\mathcal{K}(n,k+1)$. Without loss of generality, assume $\mathcal{F}$ is contained in $\mathcal{K}(n,k+1)$. Let $\mathcal{H} = \{H\in \mathcal{F}, |H| = k+1\}$. Note that $\mathcal{H}$ is an intersecting family, because the symmetric difference of any pair of subsets in $\mathcal{F}$ is bounded by $2k+1$. If $\mathcal{H}$ is a non-trivial intersecting family, by Hilton-Milner Theorem, we have
\begin{equation*}
    |\mathcal{F}|\le \sum_{i=0}^{k}\binom{n}{i}+|\mathcal{H}|\le \sum_{i=0}^{k}\binom{n}{i}+ \binom{n-1}{k}- \binom{n-k-2}{k} +1=2\sum\limits_{i=0}^{k}\binom{n-1}{i} - \binom{n-k-2}{k}+1.
\end{equation*}
Otherwise, there exists some $y\in[n]$ such that $y\in H$ for any $H\in \mathcal{H}$, which implies $\mathcal{F}$ is contained in $\mathcal{K}_{y}(n,k)$. This completes the proof.

\subsection{Families with bounded set-wise differences}
In this section, we prove Theorem~\ref{thm:DifferenceStability} and then apply it to show Corollary~\ref{coro:main}.
\begin{proof}[Proof of Theorem~\ref{thm:DifferenceStability}]
    
Let $\mathcal{F}=\{F_1,F_2,\ldots,F_m\} \subseteq 2^{[n]}$ be a family with rank $t$ and $|F_i\setminus F_j| \le k$ for any $F_i,F_j \in \mathcal{F}$. Without loss of generality, we assume that $|F_j|\ge |F_i|$ for any $i>j$, then we have $|F_1| = t$ and $|F_j\setminus F_i|\ge 1$ for any $i>j$.

Let $\boldsymbol a_{i},\boldsymbol b_{i}$ be the characteristic vectors of $F_i, F_i^c$, respectively. For $\boldsymbol x=(x_1,\ldots,x_n)$ and $i\in [m]$, we define $g_i(\boldsymbol x) =\prod_{j=1}^{k} (\boldsymbol x \cdot \boldsymbol b_i -j)$ and $f_i=\tilde{g}_{i}$. Here $\ker(f_{i})$ corresponds to characteristic vectors of subsets $F\subseteq [n]$ with $|F\setminus F_{i}|\in [k]$.

By definition, $f_i(\boldsymbol a_i) =(-1)^{k} k! \ne 0$ as $\boldsymbol a_{i}\cdot \boldsymbol b_{i}=0$ for each $i$. For $i>j$, we have $f_i(
\boldsymbol a_j)=0$ as $\boldsymbol a_{j}\cdot \boldsymbol b_{i}=|F_j\setminus F_i| \in [k]$. By Proposition~\ref{prop:triangular}, $\{f_i\}_{i=1}^{m}$ are linearly independent, which implies $m\le \sum_{j=0}^{k} \binom{n}{j}$.

Let $\mathcal{S}=\binom{[n]}{k}\setminus\partial_{k}{\mathcal{F}}$. For each subset $T\in \mathcal{S}$, we define a polynomial $h_T(\boldsymbol x) =\prod_{i\in T} x_i$. The following claim can be derived the same way as Claim~\ref{claim:S1}.

\begin{claim}\label{claim:S4}
   The polynomials $\{h_T\}_{T\in \mathcal{S}}$ and $\{f_i\}_{i=1}^{m}$ are linearly independent. 
\end{claim}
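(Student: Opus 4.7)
The plan is to apply the triangular criterion of Proposition~\ref{prop:triangular}, in exact analogy with the argument used for Claim~\ref{claim:S1}. I would order the polynomials so that $f_1,f_2,\ldots,f_m$ come first, using the ordering of $\mathcal{F}$ already fixed just above the claim (for which $f_i(\boldsymbol{a}_j)$ vanishes whenever $i>j$), and then list $h_{T_1},h_{T_2},\ldots,h_{T_r}$ in any order, where $\mathcal{S}=\{T_1,\ldots,T_r\}$. The corresponding test vectors would be $\boldsymbol{a}_1,\ldots,\boldsymbol{a}_m,\boldsymbol{a}_{T_1},\ldots,\boldsymbol{a}_{T_r}$, i.e., the characteristic vectors of the listed sets.

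The diagonal and subdiagonal conditions for the $f_i$-block have already been checked in the paragraph immediately preceding the claim: $f_i(\boldsymbol{a}_i)=(-1)^{k}k!\ne 0$ since $\boldsymbol{a}_i\cdot\boldsymbol{b}_i=0$, and $f_i(\boldsymbol{a}_j)=0$ for $i>j$ since $\boldsymbol{a}_j\cdot\boldsymbol{b}_i=|F_j\setminus F_i|\in[k]$. So the work reduces to three routine checks on the $h_T$'s. First, for every $T\in\mathcal{S}$ and every $i\in[m]$, the defining property $T\notin\partial_k\mathcal{F}$ gives $T\not\subseteq F_i$, so at least one factor of $h_T(\boldsymbol{a}_i)=\prod_{\ell\in T}(\boldsymbol{a}_i)_\ell$ vanishes; this single observation takes care of the entire $h_T$-row against all $f_i$-columns. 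Second, $h_T(\boldsymbol{a}_T)=1$ furnishes the nonzero diagonal inside the $h_T$-block. Third, for distinct $T,T'\in\mathcal{S}$, both of size $k$, we have $T\not\subseteq T'$ and hence $h_T(\boldsymbol{a}_{T'})=0$, which handles the off-diagonal positions inside the $h_T$-block. Invoking Proposition~\ref{prop:triangular} then delivers linear independence.

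I do not anticipate any genuine obstacle; the claim is a direct analog of Claim~\ref{claim:S1}, and the verification is essentially mechanical once the ordering is fixed. The only conceptual point worth flagging is that the set $\mathcal{S}=\binom{[n]}{k}\setminus\partial_k\mathcal{F}$ is engineered precisely so that every $h_T$ vanishes on all the $\boldsymbol{a}_i$, which is what permits the $h_T$-block to be stacked on top of the $f_i$-block in the triangular system without interaction. Consequently, the enlarged family of polynomials lives in the space of multilinear polynomials of degree at most $k$ and yields the strengthened bound $m+|\mathcal{S}|\le\sum_{j=0}^{k}\binom{n}{j}$, which is what the surrounding proof of Theorem~\ref{thm:DifferenceStability} will subsequently exploit.
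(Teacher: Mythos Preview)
Your proposal is correct and follows essentially the same approach as the paper, which simply states that Claim~\ref{claim:S4} can be derived the same way as Claim~\ref{claim:S1}. The three checks you list---$h_T(\boldsymbol{a}_i)=0$ since $T\not\subseteq F_i$, $h_T(\boldsymbol{a}_T)=1$, and $h_T(\boldsymbol{a}_{T'})=0$ for distinct $T,T'\in\mathcal{S}$---are exactly what the proof of Claim~\ref{claim:S1} verifies, and together with the already-established triangular structure of the $f_i$'s they yield linear independence via Proposition~\ref{prop:triangular}.
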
 

Claim~\ref{claim:S4} implies that $m \le \sum_{i=0}^{k} \binom{n}{i} - |\mathcal{S}|$. Thus, we shall assume $|\mathcal{S}| < \binom{n-t-2k}{k}$, otherwise the second alternative holds.

\begin{claim}\label{claim:S5}
    For any pair of distinct $E,F\in \mathcal{F}$ with $|F|=t,|E \setminus F| =k$, we must have $|E| =t$ and $|F\setminus E| =k$.
\end{claim}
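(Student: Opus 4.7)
The plan is a short two-line pigeonhole-style calculation that simply exploits the tightness of the rank bound and the set-wise difference bound simultaneously. Since $E\in\mathcal{F}$ and $\mathcal{F}$ has rank $t$, I would first record $|E|\le t$. Combined with the hypothesis $|E\setminus F|=k$, this yields
\[
|E\cap F| \;=\; |E|-|E\setminus F| \;\le\; t-k.
\]

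Next, using $|F|=t$, I would compute
\[
|F\setminus E| \;=\; |F|-|E\cap F| \;\ge\; t-(t-k) \;=\; k.
\]
Since the family has all pairwise set-wise differences at most $k$, we also know $|F\setminus E|\le k$, so the inequality above must be an equality. Tracing the equality back through the two displays forces $|E\cap F|=t-k$, hence $|E|=(t-k)+k=t$ and $|F\setminus E|=k$, as required.

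There is no real obstacle here: the claim is purely a consequence of the fact that when $|E\setminus F|$ already saturates the allowed difference $k$, the rank ceiling $|E|\le t$ leaves no slack for $|E\cap F|$, and hence no slack for $|F\setminus E|$ either. I expect this lemma is being isolated precisely so that, in the ensuing argument (analogous to Claims~\ref{claim:S2} and~\ref{claim:S3} in the symmetric-difference case), one may choose a set $E$ realising a $k$-shadow outside $F$ and use the equalities $|E|=t$, $|F\setminus E|=k$ to pin down the intersection $A:=E\cap F$ and show $\mathcal{F}$ is trivially $(t-k)$-intersecting.
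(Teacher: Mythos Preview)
Your proof is correct and is essentially the same argument as the paper's: both exploit the rank bound $|E|\le t$ together with the set-wise difference bound $|F\setminus E|\le k$ to force equality everywhere once $|E\setminus F|=k$. The only cosmetic difference is that the paper records the inequality chain $k\ge |F\setminus E|\ge |E\setminus F|\ge k$ directly (using $|F|\ge|E|$) and then recovers $|E|=t$ via $|E\cup F|$, whereas you pass through $|E\cap F|$ as the intermediate quantity.
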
 

\begin{poc}
    Note that $\max_{i\in[m]}\{|F_i|\}=t$, thus we have $k\ge |F\setminus E|\ge |E\setminus F| \ge k$ and so $|F\setminus E|=k$. Also, $|E|=|E\cup F|-|F\setminus E|=|E\cup F|-|E\setminus F|=|F|=t$.
\end{poc}

By our assumption, we have $\binom{|F_1^c|}{k}= \binom{n-t}{k} >|\mathcal{S}|$, hence there exists some $F\in \mathcal{F}$ such that $|F\cap F_1^c| =| F\setminus F_1| = k$. By Claim~\ref{claim:S5}, we have $|F|=t$ and $|F\cap F_1| =t-|F\setminus F_{1}|= t-k$. Let $F\cap F_1 =A$. We can further extract more structural properties of $\mathcal{F}$ as follows. 

\begin{claim}\label{claim:S6}
    For any $F' \in \mathcal{F}$, if $|F'\cap F_1^c|=k$, then we have $F'\cap F_1 = A$.
\end{claim}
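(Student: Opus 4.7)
The plan is to prove Claim~\ref{claim:S6} by reapplying the non-shadow trick already used to produce $F$ and establish Claim~\ref{claim:S5}. Given $F' \in \mathcal{F}$ with $|F' \cap F_1^c| = k$, Claim~\ref{claim:S5} immediately yields $|F'| = t$ and $|F' \cap F_1| = t - k$, so the content of Claim~\ref{claim:S6} is the refinement that $F' \cap F_1$ coincides with $A = F \cap F_1$, not merely has the same cardinality.

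First, I would bound $|F_1 \cup F \cup F'| \le |F_1| + |F \setminus F_1| + |F' \setminus F_1| \le t + 2k$, so the complement in $[n]$ has size at least $n - t - 2k$. The standing assumption $|\mathcal{S}| < \binom{n - t - 2k}{k}$ then forces some $k$-subset $Q \subseteq [n] \setminus (F_1 \cup F \cup F')$ to lie in $\partial_k \mathcal{F}$. Fix $T \in \mathcal{F}$ with $Q \subseteq T$. Since $Q \subseteq T \cap F_1^c$ and $|T \setminus F_1| \le k$, we conclude $T \setminus F_1 = Q$, so $T \ne F_1$ and $|T \setminus F_1| = k$. Applying Claim~\ref{claim:S5} to the pair $(T, F_1)$ then gives $|T| = t$ and $|T \cap F_1| = t - k$.

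Next, I would activate the constraints $|T \setminus F| \le k$ and $|T \setminus F'| \le k$. Writing $T = (T \cap F_1) \sqcup Q$ and using $F \cap F_1 = A$ together with $Q \cap F = \emptyset$, the set $T \setminus F$ splits as the disjoint union of $Q$ and $(T \cap F_1) \setminus A$, giving $|T \setminus F| = k + |(T \cap F_1) \setminus A|$; hence $T \cap F_1 \subseteq A$, and since $|T \cap F_1| = t - k = |A|$, in fact $T \cap F_1 = A$. Substituting this into the analogous splitting of $T \setminus F'$ and using $Q \cap F' = \emptyset$ yields $|T \setminus F'| = k + |A \setminus F'|$, so $A \subseteq F'$. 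Combined with $A \subseteq F_1$ and $|A| = t - k = |F' \cap F_1|$, we conclude $F' \cap F_1 = A$.

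The main obstacle is bookkeeping rather than any deep idea: one must track carefully which pieces of $T$ lie in $F_1$ versus $F_1^c$ and whether they meet $A$, in order to apply the set-wise difference bound twice (once against $F$, once against $F'$). The non-shadow hypothesis does the heavy lifting by producing a witness $T$ whose part outside $F_1$ is disjoint from both $F$ and $F'$; once $T$ is in hand, everything else follows from elementary set arithmetic.
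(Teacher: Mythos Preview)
Your proof is correct and follows essentially the same route as the paper's: both arguments use the bound $|\mathcal{S}|<\binom{n-t-2k}{k}$ to produce a witness in $\mathcal{F}$ whose part outside $F_1$ is a $k$-set disjoint from both $F$ and $F'$, then apply the set-wise difference bound against $F$ and against $F'$ to force the intersection with $F_1$ to equal $A$. The only differences are notational (you write $T,Q$ where the paper writes $E,T$) and expository---your set-arithmetic bookkeeping is spelled out more explicitly than the paper's ``applying the same analysis'' shortcut.
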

\begin{poc}
Since $|F_{1}^{c}\setminus (F'\cup F)|\ge n-t-2k$ and $|\mathcal{S}|<\binom{n-t-2k}{k}$, there exist $T\in \binom{F_1^c\setminus (F'\cup F)}{k}$ and $E \in \mathcal{F}$ such that $T\subseteq E$. Since $|T|=k$, we have $E\setminus F'=E\setminus F = T \subseteq F_1^c,$ by Claim~\ref{claim:S5} we have $|E| =t$. Also note that $F\cap E\subseteq F_{1}$, so $|F\cap E| =|F\cap E\cap F_1|=t-k$, which implies $E\cap F_1=F\cap F_1=A$. Applying the same analysis on the (ordered) triple $(F',E,F_1)$ instead of $(E,F,F_1)$, we have $F'\cap F_1=E\cap F_1=A$ as desired.
\end{poc}

Next, we will show that for any $F' \in \mathcal{F}$, we have $A \subseteq F' \cap F_1$, and so $\mathcal{F}$ is trivially $(t-k)$-intersecting with $A\subseteq\bigcap_{F'\in\mathcal{F}}F'$. Suppose there exists some $F'$, such that $A \setminus  (F' \cap F_1) \ne \emptyset$, then since $|F_1^c \setminus F'| \ge n-t-k$, and  $|\mathcal{S}|<\binom{n-t-2k}{k}$, there exist subsets $T\in \binom{F_1^c\setminus F'}{k}$ and $E \in \mathcal{F}$ such that $T\subseteq E$. By Claim~\ref{claim:S6} we know that $E\cap F_1 = A$, so $|E\setminus F'|$ = $|T \cup (A \setminus F')| \ge k+1$, which is a contradiction. 

Then we have that $A\subseteq F_i$ for all $i\in [m]$. Since $|F_i|\le t$, we have $|F_i\setminus A| \le k$, which implies that $m \le \sum_{i=0}^{k}\binom{n-(t-k)}{i}$. 
The proof of Theorem~\ref{thm:DifferenceStability} is finished.
\end{proof}

\begin{proof}[Proof of Corollary~\ref{coro:main}]
     Suppose $\mathcal{F}=\{F_1,F_2,\ldots,F_m\} \subseteq 2^{[n]}$ is a rank $t$ Sperner family such that $|F_i\setminus F_j| \le k$.
    Suppose first that $t>\frac{k+n}{2}$, as $|F_i\setminus F_j| \le k$ for any $F_i,F_j \in \mathcal{F}$, we know $\min_{i\in[m]}\{|F_i|\} \ge t-k$. Define $\mathcal{F}':=\{F_1^c,F_2^c,\ldots,F_m^c\}$. 
    Then $\max_{i\in[m]}\{|F_i^c|\}=n-\min_{i\in[m]}\{|F_i|\} \le n- (t-k)\le \frac{k+n}{2} $ and $|F_i^c\setminus F_j^c|=|F_j\setminus F_i|\le k$ for any $F_i^c,F_j^c \in \mathcal{F}'$, so we can assume $t\le\frac{k+n}{2}$.

    If $t>k$, by Theorem~\ref{thm:DifferenceStability}, we have either 
    \begin{equation*}
        |\mathcal{F}| \le  \sum_{i=0}^{k} \binom{n}{i} -\binom{\frac{n-5k}{2}}{k} < \binom{n}{k},
    \end{equation*}
    when $n\gg k$, or there exists $A$ with $|A|=t-k$ and $A\subseteq F_i$  for any $i\in[m]$, which implies $\{F_i \setminus A\}_{i=1}^{m}$ is a Sperner family with size at most $k$. 
    By Theorem~\ref{thm:LYM}, we have 

    \begin{equation*}
        1\ge \sum_{F\in \mathcal{F}} \frac{1}{\binom{n-(t-k)}{|F\setminus A|}} \ge \sum_{F\in \mathcal{F}} \frac{1}{\binom{n-(t-k)}{k}},
    \end{equation*}
    which implies $|\mathcal{F}| \le \binom{n-(t-k)}{k} <\binom{n}{k}$.

    If $t\le k$. By Theorem~\ref{thm:LYM}, we have 
    $$1\ge \sum_{F\in \mathcal{F}} \frac{1}{\binom{n}{|F|}} \ge \sum_{F\in \mathcal{F}} \frac{1}{\binom{n}{k}},$$ which implies $|\mathcal{F}| \le \binom{n}{k}$, the equality holds if and only if $\mathcal{F} = \binom{[n]}{k}$.
\end{proof}

\section{Sperner families with restricted intersections}\label{sec:Snevily}
In this section, we will prove Theorem~\ref{thm :0} and Theorem~\ref{thm:large}, respectively.
\subsection{Proof of Theorem~\ref{thm :0}}
Let $L=\{\ell_{1},\ell_{2},\ldots,\ell_{s}\}$ be a set of $s$ non-negative integers with $\ell_{1}<\ell_{2}<\cdots<\ell_{s}$. Let $\mathcal{F} = \{F_1,\ldots,F_m\}$ be an $L$-intersecting Sperner family. By relabelling, we can assume there exists some integer $r$ such that $1\in F_i$ if and only if $1\le i\le r$. For each $1\leq i\leq m$, let $\boldsymbol a_{i}$ be the characteristic vector of $F_i$. 
    For $\boldsymbol x=(x_1,\ldots,x_n)$ and $i\in [m]$, we define $g_i(\boldsymbol x) =\prod_{\ell \in L, \ell \ne |F_i|} (\boldsymbol x \cdot \boldsymbol a_i -\ell)$ and $f_i(\boldsymbol x)$ be the multilinear reduction of $g_i(\boldsymbol x)$ with $x_{1}=1$, that is, $f_i(\boldsymbol x)$ is obtained from $g_{i}(\boldsymbol{x})$ by replacing each $x_{i}^{t}$ term by $x_{i}$ for any $2\le i\le n$ and $t\ge 2$, and $x_{1}^{t}$ term by $1$ for any $t\ge 1$.

    Note that $\boldsymbol a_i \cdot \boldsymbol a_j = |F_i\cap F_j|$, and we replace $x_{1}^{t}$ with $1$ in our multilinear reduction to get $f_{i}$, this acts as if $1\in F_{j}$ when we evaluate $f_{i}(\boldsymbol{a}_{j})$ and hence 
    $$f_i(\boldsymbol a_j)= \prod_{\ell \in L, \ell \ne |F_i|} (|F_i\cap (F_j\cup\{1\})| -\ell).$$
Moreover, observe that if $i>j$ and $r\ge i$, then $|F_{i}\cap (F_{j}\cup\{1\})|=|F_{i}\cap F_{j}|$ as $1\in F_{j}$, and if $i>j$ and $r<i$, then $|F_{i}\cap (F_{j}\cup\{1\})|=|F_{i}\cap F_{j}|$ also holds as $1\notin F_{i}$. Therefore, if $i>j$, we have $f_i(\boldsymbol a_j)= \prod_{\ell \in L, \ell \ne |F_i|} (|F_i\cap F_j| -\ell)$. Since $\mathcal{F}$ is a Sperner family, we know $|F_i\cap F_j| \in L\setminus \{|F_i|\}$, which implies that $f_i(\boldsymbol a_j) =0$ for $i>j$. If $i=j$, then $f_i(\boldsymbol a_j)= \prod_{\ell \in L, \ell \ne |F_i|} (|F_i| -\ell) \ne 0$. By Proposition~\ref{prop:triangular}, the polynomials $\{f_i\}_{i=1}^{m}$ are linearly independent, which implies $m\le \sum_{j=0}^{s} \binom{n-1}{j}$ since for $i\in [m]$, $f_i(\boldsymbol x)$ has $n-1$ variables $x_2,x_3,\ldots,x_n$ and degree at most $s$. The first result then follows.

    For the second part, let $\ell_{1}=0\in L$. We will prove $|\mathcal{F}| \le \binom{n}{s}$ by induction on $s$, where $s=|L|$ and $n\ge 3s^{2}$.
    When $s=1$, by the first result, we have $|\mathcal{F}| \le  \sum_{j=0}^{s} \binom{n-1}{j} = \binom{n}{1}$, and it is easy to see that equality holds if and only if $\mathcal{F}={[n]\choose 1}$. 
    
    Now, suppose $s\ge 2$ and the result holds for any positive integer less than $s$. If $L=\{0,1,2,\ldots,s-1\}$, then for any $S\in \binom{[n]}{s}$, there exists at most one set in $\mathcal{F}$ containing $S$. Construct a new family $\mathcal{F'}:=\{F'_{1},\ldots,F'_{m}\}$, where $F'_i$ is an arbitrary subset of $F_i$ with size $s$ if $|F_{i}|> s$, and $F'_i = F_i$ otherwise. Since $\mathcal{F}$ is a Sperner family, $\mathcal{F'}$ is also a Sperner family with $|F'_i|\le s$. Then by Theorem~\ref{thm:LYM}, we have $m\le \binom{n}{s}$. It is easy to see that the equality holds if and only if $\mathcal{F} = \mathcal{F}' = \binom{[n]}{s}$. Next, we assume that $L\ne \{0,1,2,\ldots,s-1\}$.

    Let $\mathcal{H}= \{F : F \in \mathcal{F}, |F|\le s\}$ and $L' = L\cap \{0,1,2,\ldots, s-1\}$. Then $\mathcal{H}$ is an $L'$-intersecting Sperner family, with $0\in L'$ and $|L'|\le s-1$. By the induction hypothesis, we have $|\mathcal{H}|\le \binom{n}{s-1}$.

    For each $x\in [n]$, let $\mathcal{F}_x=\{F\in \mathcal{F}: x\in F \}$, and $L\setminus\{0\} = \{\ell_2,\ell_3,\ldots,\ell_s\}$. Then $\mathcal{F}_x$ is an $L\setminus\{0\}$-intersecting Sperner family with $|L\setminus\{0\}|\le s-1$, and by the first part $|\mathcal{F}_x| \le \sum_{i=0}^{s-1}\binom{n-1}{i}$. Then via double counting, we have
    \begin{equation*}
        (m-|\mathcal{H}|)(s+1) \le \sum_{x\in[n]}|\mathcal{F}_x| \le n \sum_{i=0}^{s-1}\binom{n-1}{i}.
    \end{equation*}    
    Since $n\ge 3s^2, s\ge 2$, we have $\sum_{i=0}^{s-2} \binom{n-1}{i} \le \binom{n-1}{s-2}\cdot \sum_{i=0}^{s-2} 2^{-i} \le 2 \binom{n-1}{s-2} $. Therefore, we have
    \begin{align*}
        m&\le |\mathcal{H}|+\frac{n}{s+1}\sum_{i=0}^{s-1}\binom{n-1}{i}
        \le \binom{n}{s-1} +\frac{n}{s+1} \binom{n-1}{s-1} +  \frac{n}{s+1}\sum_{i=0}^{s-2} \binom{n-1}{i} \\
        &\le \binom{n}{s-1} +\frac{s}{s+1} \binom{n}{s} +  \frac{2n}{s+1}\binom{n-1}{s-2} = \binom{n}{s-1} +\frac{s}{s+1} \binom{n}{s} +  \frac{2(s-1)}{s+1}\binom{n}{s-1}\\ &= \bigg(\frac{3s-1}{s+1}\cdot\frac{s}{n-s+1} + \frac{s}{s+1}\bigg)\binom{n}{s} \le  \bigg(\frac{1}{s+1}\cdot\frac{3s^2-s}{3s^2-s+1} + \frac{s}{s+1}\bigg)\binom{n}{s} < \binom{n}{s}.
    \end{align*}

\subsection{Proof of Theorem~\ref{thm:large}}
 Let $T\in\binom{[n]}{\ell_{1}}$ be the set satisfying that $|\{F\in\mathcal{F}: T\subseteq F\}|$ is the largest among all sets in $\binom{[n]}{\ell_{1}}$. Let $\mathcal{H} = \{ F\in \mathcal{F}: T\subseteq F\}$. If $\mathcal{H}=\mathcal{F}$, then $\C F$ is trivially $\ell_1$-intersecting and by Theorem~\ref{thm :0}, as $|T|=\ell_{1}\ge 1$, we have $\{F\setminus T :F\in\mathcal{F}\}\subseteq 2^{[n]\setminus T}$ and so
    \begin{equation*}
        |\mathcal{F}|=\big|\{F\setminus T :F\in\mathcal{F}\}\big| \le \sum_{i=0}^{s} \binom{n-2}{i} = \binom{n}{s} -\binom{n-1}{s-1} +\sum_{i=0}^{s-2} \binom{n-2}{i} = \binom{n}{s}-\Omega_{s}(n^{s-1}).
    \end{equation*}

    We may then assume $\C H\neq \C F$ and so there exists some $E \in \mathcal{F}$ with $T \not\subseteq E$. For any $F\in \mathcal{H}$ we have $|F\cap E| \ge \ell_1$, so $(E\setminus T)\cap F\ne \emptyset$, for otherwise $T\subseteq E$. Consequently, for each $x\in E\setminus T$, letting $\mathcal{F}_x=\{ F: F\in\mathcal{H}, x\in F \}$, we have $\mathcal{H}\subseteq \bigcup_{x\in E\setminus T} \mathcal{F}_x$. Since for any sets $A,B \in \mathcal{F}_x$, $\{x\}\cup T
    \subseteq A\cap B$, $F_x$ is an $\{\ell_2,\ell_3,\ldots,\ell_s\}$-intersecting Sperner family. Then, by Theorem~\ref{thm :0}, we have $|\mathcal{F}_x| \le \sum_{i=0}^{s-1}\binom{n-1}{i}$ and  $|\mathcal{H}| \le M \sum_{i=0}^{s-1}\binom{n-1}{i}$. 

    Take an arbitrary set $X\in\mathcal{F}$. For any set $F \in \mathcal{F}$, we have $|X\cap F| \ge \ell_1$, thus we can partition the whole family $\mathcal{F}$ into subfamilies $\mathcal{H}_{Y}$ according to which $\ell_{1}$-subset $Y\in \binom{X}{\ell_{1}}$ its members contain. Note that for each such $\C H_Y$, $|\mathcal{H}_{Y}|\le |\mathcal{H}|$. As $|X|\le M$, we have $|\mathcal{F}| \le  \binom{M}{\ell_1}|\mathcal{H}| =O_{M,s}(n^{s-1})$.

    \section*{Acknowledgement}
  We would like to express our gratitude to the anonymous reviewer for the detailed and constructive comments which are helpful to the improvement of the technical presentation of this paper. We also thank Yongtao Li and Tuan Tran for helpful discussions.

\bibliographystyle{abbrv}
\bibliography{TightSperner}

\begin{thebibliography}{10}

\bibitem{1991AlonBabaiSuzuki}
N.~Alon, L.~Babai, and H.~Suzuki.
\newblock Multilinear polynomials and {F}rankl--{R}ay-{C}haudhuri--{W}ilson
  type intersection theorems.
\newblock {\em J. Combin. Theory Ser. A}, 58(2):165--180, 1991.

\bibitem{1988Babai}
L.~Babai.
\newblock A short proof of the nonuniform {R}ay-{C}haudhuri-{W}ilson
  inequality.
\newblock {\em Combinatorica}, 8(1):133--135, 1988.

\bibitem{1983Bannai}
E.~Bannai, E.~Bannai, and D.~Stanton.
\newblock An upper bound for the cardinality of an {$s$}-distance subset in
  real {E}uclidean space. {II}.
\newblock {\em Combinatorica}, 3(2):147--152, 1983.

\bibitem{1984Blokhuis}
A.~Blokhuis.
\newblock A new upper bound for the cardinality of {$2$}-distance sets in
  {E}uclidean space.
\newblock In {\em Convexity and graph theory ({J}erusalem, 1981)}, volume~87 of
  {\em North-Holland Math. Stud.}, pages 65--66. North-Holland, Amsterdam,
  1984.

\bibitem{1965Bollobas}
B.~Bollob\'{a}s.
\newblock On generalized graphs.
\newblock {\em Acta Math. Acad. Sci. Hungar.}, 16:447--452, 1965.

\bibitem{2009JCTAChen}
W.~Y.~C. Chen and J.~Liu.
\newblock Set systems with {$L$}-intersections modulo a prime number.
\newblock {\em J. Combin. Theory Ser. A}, 116(1):120--131, 2009.

\bibitem{1972Cvetkovic}
D.~M. Cvetkovi\'{c}.
\newblock Chromatic number and the spectrum of a graph.
\newblock {\em Publ. Inst. Math. (Beograd) (N.S.)}, 14(28):25--38, 1972.

\bibitem{1961EKR}
P.~Erd\H{o}s, C.~Ko, and R.~Rado.
\newblock Intersection theorems for systems of finite sets.
\newblock {\em Quart. J. Math. Oxford Ser. (2)}, 12:313--320, 1961.

\bibitem{1940Fisher}
R.~A. Fisher.
\newblock An examination of the different possible solutions of a problem in
  incomplete blocks.
\newblock {\em Ann. Eugenics}, 10:52--75, 1940.

\bibitem{1985Frankl}
P.~Frankl.
\newblock Bounding the size of a family knowing the cardinality of differences.
\newblock {\em Studia Sci. Math. Hungar.}, 20(1-4):33--36, 1985.

\bibitem{2017Frankl}
P.~Frankl.
\newblock Antichains of fixed diameter.
\newblock {\em Mosc. J. Comb. Number Theory}, 7(3):3--33, 2017.

\bibitem{2017CPCFrankl}
P.~Frankl.
\newblock A stability result for families with fixed diameter.
\newblock {\em Combin. Probab. Comput.}, 26(4):506--516, 2017.

\bibitem{2017JCTBFrankl}
P.~Frankl.
\newblock A stability result for the {K}atona theorem.
\newblock {\em J. Combin. Theory Ser. B}, 122:869--876, 2017.

\bibitem{1981FranklWilson}
P.~Frankl and R.~M. Wilson.
\newblock Intersection theorems with geometric consequences.
\newblock {\em Combinatorica}, 1(4):357--368, 1981.

\bibitem{2019book}
D.~Gerbner and B.~Patk\'{o}s.
\newblock {\em Extremal finite set theory}.
\newblock Discrete Mathematics and its Applications (Boca Raton). CRC Press,
  Boca Raton, FL, 2019.

\bibitem{1967Hilton}
A.~J.~W. Hilton and E.~C. Milner.
\newblock Some intersection theorems for systems of finite sets.
\newblock {\em Quart. J. Math. Oxford Ser. (2)}, 18:369--384, 1967.

\bibitem{2020Huang}
H.~Huang, O.~Klurman, and C.~Pohoata.
\newblock On subsets of the hypercube with prescribed {H}amming distances.
\newblock {\em J. Combin. Theory Ser. A}, 171:105156, 21, 2020.

\bibitem{2015EUJC}
K.-W. Hwang and Y.~Kim.
\newblock A proof of {A}lon-{B}abai-{S}uzuki's conjecture and multilinear
  polynomials.
\newblock {\em European J. Combin.}, 43:289--294, 2015.

\bibitem{2007EUJC}
K.-W. Hwang and N.~N. Sheikh.
\newblock Intersection families and {S}nevily's conjecture.
\newblock {\em European J. Combin.}, 28(3):843--847, 2007.

\bibitem{1964Katona}
G.~Katona.
\newblock Intersection theorems for systems of finite sets.
\newblock {\em Acta Math. Acad. Sci. Hungar.}, 15:329--337, 1964.

\bibitem{1983Katona}
G.~O.~H. Katona.
\newblock Open problems.
\newblock {\em Matematikus Kurir}, 1983.

\bibitem{1966Kleitman}
D.~J. Kleitman.
\newblock On a combinatorial conjecture of {E}rd{\H{o}}s.
\newblock {\em J. Combinatorial Theory}, 1:209--214, 1966.

\bibitem{1966Lubell}
D.~Lubell.
\newblock A short proof of {S}perner's lemma.
\newblock {\em J. Combinatorial Theory}, 1:299, 1966.

\bibitem{1963Me}
L.~D. Me\v{s}alkin.
\newblock A generalization of {S}perner's theorem on the number of subsets of a
  finite set.
\newblock {\em Teor. Verojatnost. i Primenen}, 8:219--220, 1963.

\bibitem{2007JACMubayi}
D.~Mubayi and Y.~Zhao.
\newblock On the {VC}-dimension of uniform hypergraphs.
\newblock {\em J. Algebraic Combin.}, 25(1):101--110, 2007.

\bibitem{1975Ray}
D.~K. Ray-Chaudhuri and R.~M. Wilson.
\newblock On {$t$}-designs.
\newblock {\em Osaka Math. J.}, 12(3):737--744, 1975.

\bibitem{1994JCTASnevily}
H.~S. Snevily.
\newblock On generalizations of the de {B}ruijn-{E}rd{\H{o}}s theorem.
\newblock {\em J. Combin. Theory Ser. A}, 68(1):232--238, 1994.

\bibitem{1995SnevilyJCD}
H.~S. Snevily.
\newblock A generalization of the {R}ay-{C}haudhuri-{W}ilson theorem.
\newblock {\em J. Combin. Des.}, 3(5):349--352, 1995.

\bibitem{2003Snevily}
H.~S. Snevily.
\newblock A sharp bound for the number of sets that pairwise intersect at {$k$}
  positive values.
\newblock {\em Combinatorica}, 23(3):527--533, 2003.

\bibitem{1928Sperner}
E.~Sperner.
\newblock Ein {S}atz \"{u}ber {U}ntermengen einer endlichen {M}enge.
\newblock {\em Math. Z.}, 27(1):544--548, 1928.

\bibitem{2018DmWang}
X.~Wang, H.~Wei, and G.~Ge.
\newblock A strengthened inequality of {A}lon-{B}abai-{S}uzuki's conjecture on
  set systems with restricted intersections modulo {$p$}.
\newblock {\em Discrete Math.}, 341(1):109--118, 2018.

\bibitem{1954Ya}
K.~Yamamoto.
\newblock Logarithmic order of free distributive lattice.
\newblock {\em J. Math. Soc. Japan}, 6:343--353, 1954.

\end{thebibliography}
\end{document}